\documentclass{amsart}
\usepackage{amssymb,latexsym,amsmath,amscd,graphicx,setspace,amsthm,verbatim,comment, stmaryrd,upgreek}
\usepackage{tikz,tkz-berge}
\usepackage[margin = 3 cm]{geometry} \input xy \xyoption{all}

\usetikzlibrary{shapes.geometric}

\usetikzlibrary{decorations.markings}

\tikzset{every loop/.style={min distance=10mm,looseness=10}}

\newcount\mycount

\theoremstyle{plain}
\newtheorem{theorem}{Theorem}[section]
\newtheorem{proposition}[theorem]{Proposition}

\newtheorem{corollary}[theorem]{Corollary}
\newtheorem{lemma}[theorem]{Lemma}

\makeatletter
\def\th@remark{%
  \thm@headfont{\bfseries}%
  \normalfont % body font
  \thm@preskip\topsep \divide\thm@preskip\tw@
  \thm@postskip\thm@preskip
}
\makeatother

\theoremstyle{remark} 
\newtheorem{remark}[theorem]{Remark}
      
\theoremstyle{definition}
\newtheorem{definition}[theorem]{Definition}

\def\Z{\mathbb{Z}}
\def\N{\mathbb{N}}

\begin{document} 
\title[On abelian  $\ell$-towers of multigraphs III]{On abelian  $\ell$-towers of multigraphs III}

\author{Kevin McGown, Daniel Valli\`{e}res}

\address{Mathematics and Statistics Department, California State University, Chico, CA 95929 USA} 
\email{kmcgown@csuchico.edu} 
\address{Mathematics and Statistics Department, California State University, Chico, CA 95929 USA}
\email{dvallieres@csuchico.edu}

\subjclass[2010]{Primary: 05C25; Secondary: 11R18, 11Z05, 13F20}
\date{\today}

\begin{abstract}
Let $\ell$ be a rational prime.  Previously, abelian $\ell$-towers of multigraphs were introduced which are analogous to $\Z_{\ell}$-extensions of number fields.  It was shown that for towers of bouquets, the growth of the $\ell$-part of the number of spanning trees behaves in a predictable manner (analogous to a well-known theorem of Iwasawa for $\Z_{\ell}$-extensions of number fields).  In this paper, we extend this result to abelian $\ell$-towers over an arbitrary connected multigraph (not necessarily simple and not necessarily regular).
In order to carry this out, we employ integer-valued polynomials to construct power series with coefficients in $\Z_\ell$ arising from cyclotomic number fields, different than the power series appearing in the prequel.  This allows us to study the special value at $u=1$ of the Artin--Ihara $L$-function, when the base multigraph is not necessarily a bouquet.
\end{abstract} 
\maketitle 
\tableofcontents 
\tikzset{->-/.style={decoration={
  markings,
  mark=at position #1 with {\arrow{>}}},postaction={decorate}}}
%\section{Introduction}
\section{Introduction}

In \cite{Vallieres:2021}, abelian $\ell$-towers of multigraphs, where $\ell$ is a rational prime, were introduced which can be viewed as being analogous to $\mathbb{Z}_{\ell}$-extensions of number fields.  To every tuple in $\Z_\ell^t$ (with $t\in\N$ and not all entries divisible by $\ell$) one can associate an abelian $\ell$-tower of a bouquet with $t$ loops.  It was proved in \cite{McGown/Vallieres:2021} that the $\ell$-adic valuation of the number of spanning trees behaves similarly to the $\ell$-adic valuation of the class numbers in $\mathbb{Z}_{\ell}$-extensions of number fields.  More precisely, if $\kappa_n$ denotes the number of spanning trees of the multigraph at the $n$-th level, then there exist non-negative integers $\mu, \lambda, n_{0}$ and an integer $\nu$ such that
$${\rm ord}_{\ell}(\kappa_n)=\mu\ell^n+\lambda n + \nu,$$ 
for $n\geq n_0$.  Meanwhile, in \cite{Gonet:2021}, the same behavior was shown to be true with a different method in the situation where the base multigraph is an arbitrary simple graph, meaning that it does not contain loops and parallel edges.  In this paper, we extend this result to abelian $\ell$-towers over arbitrary base multigraphs (not necessarily simple and not necessarily regular) by adapting the strategy of \cite{McGown/Vallieres:2021} and \cite{Vallieres:2021} to the current more general situation. 

The paper is organized as follows.  In \S \ref{integer_val}, we remind the reader about integer-valued polynomials.  We then use integer-valued polynomials in \S \ref{power_ser} to construct some power series in $\mathbb{Z}_{\ell} \llbracket T\rrbracket$ arising from cyclotomic number fields which will be used to calculate the invariants $\mu$ and $\lambda$.  In \S \ref{voltage}, we remind the reader about voltage assignments which give a convenient way of constructing abelian $\ell$-towers of multigraphs.  One can view the derived multigraphs obtained from voltage assignments as generalizations of the Cayley-Serre multigraphs used in \cite{McGown/Vallieres:2021} and \cite{Vallieres:2021}.  In \S \ref{ihara}, we study the special value at $u=1$ of the Artin-Ihara $L$-functions arising in abelian $\ell$-towers of multigraphs.  In \S \ref{main_th}, we formulate our main result (see Theorem \ref{maintheorem}), and we present a few examples in \S \ref{examples}.  At last, in \S \ref{Gonet} we point out the connection between our work and the work of Gonet included in \cite{Gonet:2021}.

%\subsection*{Acknowledgement}
%We would like to thank Thomas Mattman and John Lind for stimulating discussions.

%\section{Integer-valued polynomials}
\section{Integer-valued polynomials} \label{integer_val}
Our main reference for this section is \cite{Cahen/Chabert:2016}.  Recall that the set of integer-valued polynomials is
$${\rm Int}(\mathbb{Z}) = \{P(X) \in \mathbb{Q}[X] \, | \, P(\mathbb{Z}) \subseteq \mathbb{Z}\}. $$
It is a unital commutative ring.  As usual, for $n \ge 2$, we let
$$\binom{X}{n} = \frac{X(X-1) \ldots (X - n +1)}{n!} \in \mathbb{Q}[X],$$
and we set $\binom{X}{0} = 1$ and $\binom{X}{1}=X$.  It is known that $\binom{X}{n} \in {\rm Int}(\mathbb{Z})$ and that every integer-valued polynomial $P(X)$ of degree $n$ can be uniquely written as
$$P(X) = \sum_{k=0}^{n}c_{k}\binom{X}{k},$$
for some $c_{k} \in \mathbb{Z}$ that can be written down explicitly.  We refer to \S $1$ of \cite{Cahen/Chabert:2016} for details.

\begin{proposition} \label{Lipschitz}
Let $\ell$ be a rational prime and let $P(X) \in {\rm Int}(\mathbb{Z})$.  Then the induced function $P:\mathbb{Z} \longrightarrow \mathbb{Z}$ is Lipschitz when $\mathbb{Z}$ is endowed with the $\ell$-adic topology.  More precisely, we have 
$$|P(m) - P(n)|_{\ell} \le C(\ell,P) \cdot |m-n|_{\ell}, $$
for all $m,n \in \mathbb{Z}$, where
$$C(\ell,P) =  \ell^{\lfloor \log_{\ell}({\rm deg}(P))\rfloor}.$$
\end{proposition}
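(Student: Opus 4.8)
The plan is to reduce the inequality to a transparent statement about the Newton basis $\bigl\{\binom{X}{k}\bigr\}_{k\ge 0}$ of ${\rm Int}(\mathbb{Z})$, where $\ell$-adic divisibility is easy to control. It is tempting to argue by clearing denominators: writing $P(X)=\sum_{k=0}^{d}c_{k}\binom{X}{k}$ with $d={\rm deg}(P)$ and $c_{k}\in\mathbb{Z}$, and $Q_{k}(X)=k!\binom{X}{k}\in\mathbb{Z}[X]$, one has $(m-n)\mid Q_{k}(m)-Q_{k}(n)$ in $\mathbb{Z}$, whence $|P(m)-P(n)|_{\ell}\le\max_{k}\ell^{{\rm ord}_{\ell}(k!)}\cdot|m-n|_{\ell}$. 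But $\ell^{{\rm ord}_{\ell}(d!)}$ can be strictly larger than $\ell^{\lfloor\log_{\ell}(d)\rfloor}$ (already for $d=\ell^{2}$), so this bound is too lossy; the correct move is to stay inside the ring ${\rm Int}(\mathbb{Z})$ and expand $P$ in finite differences.

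First I would recall two structural facts about the forward difference operator $\Delta$ given by $(\Delta f)(X)=f(X+1)-f(X)$: it maps ${\rm Int}(\mathbb{Z})$ into itself, and $\Delta\binom{X}{k}=\binom{X}{k-1}$. Hence $\Delta^{j}P=\sum_{k\ge j}c_{k}\binom{X}{k-j}$ lies in ${\rm Int}(\mathbb{Z})$ for every $j$, vanishes identically once $j>d$, and in particular $(\Delta^{j}P)(n)\in\mathbb{Z}$ for all $n\in\mathbb{Z}$. Next I would establish the finite-difference expansion
\[
P(m)-P(n)=\sum_{j=1}^{d}\binom{m-n}{j}\,(\Delta^{j}P)(n)\qquad(m,n\in\mathbb{Z}),
\]
which follows from Vandermonde's identity $\binom{a+b}{k}=\sum_{j}\binom{a}{j}\binom{b}{k-j}$ applied term-by-term to $P(m)=\sum_{k}c_{k}\binom{(m-n)+n}{k}$ (equivalently, by expanding the shift operator as $(\mathrm{id}+\Delta)^{m-n}$).

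With this in hand the estimate is essentially immediate. Since each $(\Delta^{j}P)(n)$ is an integer, it suffices to bound $\binom{h}{j}$ for $h=m-n$, and from $\binom{h}{j}=\tfrac{h}{j}\binom{h-1}{j-1}$ together with $\binom{h-1}{j-1}\in\mathbb{Z}$ and $\ell^{{\rm ord}_{\ell}(j)}\le j$ one obtains
\[
{\rm ord}_{\ell}\!\left(\binom{h}{j}\right)\ \ge\ {\rm ord}_{\ell}(h)-{\rm ord}_{\ell}(j)\ \ge\ {\rm ord}_{\ell}(h)-\lfloor\log_{\ell}j\rfloor.
\]
The ultrametric inequality then gives ${\rm ord}_{\ell}(P(m)-P(n))\ge{\rm ord}_{\ell}(m-n)-\max_{1\le j\le d}\lfloor\log_{\ell}j\rfloor$, and since $j\mapsto\lfloor\log_{\ell}j\rfloor$ is non-decreasing the maximum equals $\lfloor\log_{\ell}d\rfloor$; exponentiating yields exactly $|P(m)-P(n)|_{\ell}\le\ell^{\lfloor\log_{\ell}({\rm deg}(P))\rfloor}\cdot|m-n|_{\ell}$ (the degenerate cases $m=n$ and $P$ constant being trivial). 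The only genuine difficulty here is the conceptual point flagged above — recognizing that clearing denominators wastes too much and that one should instead work relative to the Newton basis; once the expansion is set up, every remaining step is elementary.
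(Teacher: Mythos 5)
Your proof is correct. The paper does not actually give an argument here: it simply refers the reader to Proposition~9 of the Cahen--Chabert survey cited as \cite{Cahen/Chabert:2016}. The argument you supply---expanding $P$ in the Newton basis, noting that $\Delta^{j}P\in{\rm Int}(\mathbb{Z})$ so that $(\Delta^{j}P)(n)\in\mathbb{Z}$, using the Vandermonde expansion $P(m)-P(n)=\sum_{j=1}^{d}\binom{m-n}{j}(\Delta^{j}P)(n)$, and then bounding ${\rm ord}_{\ell}\binom{h}{j}\ge{\rm ord}_{\ell}(h)-\lfloor\log_{\ell}j\rfloor$ via $\binom{h}{j}=\frac{h}{j}\binom{h-1}{j-1}$---is precisely the standard proof of that cited result, so in effect you have filled in the proof the authors chose to outsource. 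Your opening remark about why clearing denominators to $k!\binom{X}{k}\in\mathbb{Z}[X]$ only yields the weaker constant $\ell^{{\rm ord}_{\ell}(d!)}$ is also accurate and correctly motivates working in the Newton basis; and you are right that the cases $m=n$ and $\deg P=0$ need a separate (trivial) word since $\log_{\ell}$ is otherwise undefined or the sum is empty. No gaps.
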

\begin{proof}
See Proposition $9$ in \cite{Cahen/Chabert:2016}.
\end{proof}

%\section{Some power series arising from cyclotomic number fields}
\section{Some power series arising from cyclotomic number fields} \label{power_ser}
In \cite{McGown/Vallieres:2021}, for each $a \in \mathbb{Z}_{\ell}$, a power series $P_{a}(T) \in \mathbb{Z}_{\ell}\llbracket T\rrbracket$ obtained as an $\ell$-adic limit of some shifted Chebyshev polynomials was constructed in order to prove the main result.  (Theorem $4.1$ in \cite{McGown/Vallieres:2021}.)  Using Proposition \ref{Lipschitz}, the main idea of \S $2$ and \S $3$ of \cite{McGown/Vallieres:2021} can be adapted to construct other power series which we find more convenient to work with in the present paper.  (In fact, Proposition \ref{Lipschitz} could be used to prove Corollary $3.1$ of \cite{McGown/Vallieres:2021} instead of using Lemma $2.2$ and Proposition $2.3$ of \cite{McGown/Vallieres:2021}.)

Given $m \in \mathbb{N} = \{1,2,\ldots \}$, we let $\zeta_{m} = {\rm exp}(2 \pi i/m) \in \mathbb{C}$.  If $a \in \mathbb{Z}$, we set
$$ \pi_{m}(a) = 1 - \zeta_{m}^{a} \in \overline{\mathbb{Q}} \subseteq \mathbb{C},$$
and we write $\pi_{m}$ rather than $\pi_{m}(1)$.
Note that $\pi_{m}(a)$ is an element of the cyclotomic number field $\mathbb{Q}(\zeta_{m})$.  Let now $\ell$ be a rational prime, $i \in \mathbb{N}$, and consider the cyclotomic number field $\mathbb{Q}(\zeta_{\ell^{i}})$.  Above $\ell$, there is a unique prime ideal which we denote by $\mathcal{L}_{i}$.  Furthermore, it is known that $\mathcal{L}_{i}$ is a principal ideal and 
\begin{equation} \label{principal}
\mathcal{L}_{i} = (\pi_{\ell^{i}}).
\end{equation}
For $a \in \mathbb{N}$, consider 
$$Q_{a}(T) = 1 - (1-T)^{a} \in \mathbb{Z}[T],$$
and set $Q_{0}(T) = 0$.  Then, 
\begin{equation} \label{eq}
Q_{a}(\pi_{m}) = \pi_{m}(a), 
\end{equation}
for all $m \in \mathbb{N}$ and for all $a \in \mathbb{Z}_{\ge 0 }$.  Note that the polynomials $Q_{a}(T)$ have no constant coefficient and have degree $a$.  For $k,n \in \mathbb{N}$, we let
\begin{equation*}
e_{k}(n)=
\begin{cases}
(-1)^{k-1}\binom{n}{k},&\text{ if } k \le n;\\
0,&\text{ otherwise}.
\end{cases}
\end{equation*}
Using the binomial expansion of $(1-T)^{a}$, we have
$$Q_{a}(T) = e_{1}(a)T + e_{2}(a)T^{2} + \ldots + e_{a}(a) T^{a}. $$
We note in passing that
$$e_{1}(n) = n \text{ and } e_{n}(n) = (-1)^{n-1}. $$

We now consider the complete local ring $\mathbb{Z}_{\ell}\llbracket T\rrbracket$ and the function $g:\mathbb{N} \longrightarrow \mathbb{Z}_{\ell}\llbracket T \rrbracket$ defined via
$$n \mapsto g(n) = Q_{n}(T). $$

\begin{proposition}
The function $g$ is uniformly continuous when $\mathbb{N}$ is endowed with the $\ell$-adic topology.
\end{proposition}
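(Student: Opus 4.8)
The plan is to reduce the statement to Proposition~\ref{Lipschitz}. Write $R := \mathbb{Z}_{\ell}\llbracket T\rrbracket$ and let $\mathfrak m = (\ell, T)$ be its maximal ideal, so that the powers $\mathfrak m^{N}$, $N \ge 1$, form a fundamental system of neighbourhoods of $0$ in $R$. Since $\mathbb{Z}$ (with the $\ell$-adic topology) and $R$ are abelian topological groups, uniform continuity of $g$ is equivalent to the assertion that for every $N \in \mathbb{N}$ there is $M \in \mathbb{N}$ with
\[
\ell^{M} \mid m - n \quad\Longrightarrow\quad Q_{m}(T) - Q_{n}(T) \in \mathfrak m^{N}
\]
for all $m, n \in \mathbb{N}$. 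I would prove this with $M = N - 1$ (or $M = N$, if one wants $M \ge 1$).

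The first step is the finite identity
\[
Q_{m}(T) - Q_{n}(T) \;=\; \sum_{k \ge 1} (-1)^{k-1}\!\left( \binom{m}{k} - \binom{n}{k} \right) T^{k},
\]
obtained by expanding $1 - (1-X)^{a}$ for $a = m$ and $a = n$ and subtracting. Here $\binom{X}{k}$ is the integer-valued polynomial recalled in \S\ref{integer_val}, and since it vanishes at $0, 1, \dots, k-1$ the identity is valid for \emph{all} $m, n \in \mathbb{N}$ and is consistent with the definition of $e_{k}(\cdot)$ and with $Q_{0}(T) = 0$.

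The second and only substantial step is to feed $P(X) = \binom{X}{k}$, which has degree $k$ and hence Lipschitz constant $\ell^{\lfloor \log_{\ell} k \rfloor}$, into Proposition~\ref{Lipschitz}; this yields
\[
v_{\ell}\!\left( \binom{m}{k} - \binom{n}{k} \right) \;\ge\; v_{\ell}(m-n) - \lfloor \log_{\ell} k \rfloor \qquad (k \ge 1).
\]
Together with $T^{k} \in \mathfrak m^{k}$, this puts the $k$-th summand of the identity into $\mathfrak m^{\,v_{\ell}(m-n) - \lfloor \log_{\ell}k\rfloor + k}$ when $v_{\ell}(m-n) \ge \lfloor \log_{\ell}k\rfloor$, and into $\mathfrak m^{k}$ otherwise. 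Assume now $v_{\ell}(m-n) \ge N - 1$. Invoking the elementary inequality $k - \lfloor \log_{\ell} k \rfloor \ge 1$ for all $k \ge 1$ (which follows at once from $\ell^{k-1} \ge k$), the first case gives $v_{\ell}(m-n) - \lfloor \log_{\ell}k\rfloor + k \ge v_{\ell}(m-n) + 1 \ge N$, while in the second case $\lfloor \log_{\ell}k\rfloor > v_{\ell}(m-n) \ge N - 1$ forces $k \ge \lfloor \log_{\ell}k\rfloor + 1 > N$; in both cases the $k$-th summand lies in $\mathfrak m^{N}$. As the sum is finite, $Q_{m}(T) - Q_{n}(T) \in \mathfrak m^{N}$, which is the desired implication.

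I do not expect a genuine obstacle: Proposition~\ref{Lipschitz} carries the entire analytic content, and everything else is bookkeeping with the $(\ell, T)$-adic filtration of $R$. The one point that genuinely needs to be checked is that the loss of the factor $\ell^{\lfloor \log_{\ell}k\rfloor}$ in the Lipschitz bound is always absorbed by the gain $T^{k} \in \mathfrak m^{k}$ — that is, the inequality $k \ge \lfloor \log_{\ell}k\rfloor + 1$. One could also bypass the coefficientwise computation altogether by using the factorization $Q_{m}(T) - Q_{n}(T) = (1-T)^{n}\bigl( 1 - (1-T)^{m-n} \bigr) = (1-T)^{n}\,Q_{m-n}(T)$ and estimating $Q_{d}(T)$ through the same bound applied to $\binom{d}{k} = \binom{d}{k} - \binom{0}{k}$; this is the variant closest to the arguments in \S 2--\S 3 of \cite{McGown/Vallieres:2021}.
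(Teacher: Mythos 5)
Your proof is correct and follows essentially the same route as the paper: reduce to a coefficientwise estimate via Proposition~\ref{Lipschitz} applied to $\binom{X}{k}$, and absorb the resulting loss using $T^{k} \in \mathfrak m^{k}$. The only difference is that the paper first weakens $\lfloor \log_{\ell}(\deg P)\rfloor \le \deg P$ (obtaining $M = N$), whereas you keep the sharper constant and observe $k - \lfloor\log_{\ell}k\rfloor \ge 1$ to get $M = N-1$; this is a minor quantitative refinement, not a different argument.
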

\begin{proof}
Proposition \ref{Lipschitz} implies that if $P(X) \in {\rm Int}(\mathbb{Z})$, then
$${\rm ord}_{\ell}(P(m) - P(n)) \ge {\rm ord}_{\ell}(m-n) - \lfloor \log_{\ell}({\rm deg}(P))\rfloor, $$
for all $m,n \in \mathbb{N}$.  Noting that $\lfloor \log_{\ell}(k) \rfloor \le k$ for $k \ge 1$, we have in fact
$${\rm ord}_{\ell}(P(m) - P(n)) \ge {\rm ord}_{\ell}(m-n) - {\rm deg}(P).$$
Consider now
$$e_{k}(X) =  (-1)^{k-1}\binom{X}{k} \in {\rm Int}(\mathbb{Z}).$$
Note that ${\rm deg}(e_{k}(X)) = k$; thus, if $m,n \in \mathbb{N}$ are such that
$$ m \equiv n \pmod{\ell^{s}},$$
for some $s \in \mathbb{N}$, then 
$$e_{k}(m) \equiv e_{k}(n) \pmod{\ell^{s-k}}$$
for $k=1,\ldots, s-1$.  So given any $N \in \mathbb{N}$ and $m,n \in \mathbb{N}$ satisfying
$${\rm ord}_{\ell}(m - n) \ge N,$$
we have
$$Q_{m}(T) - Q_{n}(T) \in \mathfrak{m}^{N},$$
where $\mathfrak{m}$ is the maximal ideal of $\mathbb{Z}_{\ell}\llbracket T \rrbracket$, and this shows the claim.
\end{proof}
Just as in \S$3$ of \cite{McGown/Vallieres:2021}, since $\mathbb{N}$ is dense in $\mathbb{Z}_{\ell}$, the function $g$ can be uniquely extended to a continuous function 
$$g:\mathbb{Z}_{\ell} \longrightarrow \mathbb{Z}_{\ell} \llbracket T\rrbracket, $$
which we denote by the same symbol.  If $a \in \mathbb{Z}_{\ell}$, we let 
$$Q_{a}(T) = g(a) \in \mathbb{Z}_{\ell}\llbracket T \rrbracket. $$

We fix an embedding $\tau:\mathbb{Q}(\zeta_{\ell^{\infty}})\hookrightarrow \overline{\mathbb{Q}}_{\ell}$ and for $i=0,1,\ldots \,$, we let $\xi_{\ell^{i}} = \tau(\zeta_{\ell^{i}}) \in \overline{\mathbb{Q}}_{\ell}$.  Furthermore for $a \in \mathbb{Z}$, we let
$$\rho_{\ell^{i}}(a) = \tau(\pi_{\ell^{i}}(a)) = 1-\xi_{\ell^{i}}^{a} \in \overline{\mathbb{Q}}_{\ell}, $$
and we write $\rho_{\ell^{i}}$ rather than $\rho_{\ell^{i}}(1)$.  We denote the valuation on $\mathbb{C}_{\ell}$ by $v_{\ell}$, normalized so that $v_{\ell}(\ell)=1$, and the absolute value on $\mathbb{C}_{\ell}$ by $|\cdot|_{\ell}$, normalized so that $v_{\ell}(x) = - \log_{\ell} (|x|_{\ell})$ for all $x \in \mathbb{C}_{\ell}$.  The relationship between $v_{\ell}$ and the valuation ${\rm ord}_{\mathcal{L}_{i}}$ on $\mathbb{Q}(\zeta_{\ell^{i}})$ associated to the prime ideal $\mathcal{L}_{i}$ is
\begin{equation} \label{relationship_val}
v_{\ell}(\tau(x)) = \frac{1}{\varphi(\ell^{i})} {\rm ord}_{\mathcal{L}_{i}}(x), 
\end{equation}
for all $x \in \mathbb{Q}(\zeta_{\ell^{i}})$, where $\varphi$ is the Euler $\varphi$-function.  In particular, by (\ref{principal}), we have
\begin{equation} \label{val}
v_{\ell}(\rho_{\ell^{i}}) = \frac{1}{\varphi(\ell^{i})} \text{ and } |\rho_{\ell^{i}}|_{\ell} = \ell^{-1/\varphi(\ell^{i})}.
\end{equation}

The same argument as in \S $3$ of \cite{McGown/Vallieres:2021} shows that for each $i \in \mathbb{N}$, we have a continuous function $\mathbb{Z}_{\ell} \longrightarrow \mathbb{C}_{\ell}$ defined via
$$a \mapsto \rho_{\ell^{i}}(a) = 1 - \xi_{\ell^{i}}^{a}, $$
and we note that in fact $\rho_{\ell^{i}}(\mathbb{Z}_{\ell}) \subseteq \overline{\mathbb{Q}}_{\ell}$.  From now on, we let
$$D = \{x \in \mathbb{C}_{\ell} \, : \, |x|_{\ell} < 1\}.$$
\begin{lemma} \label{cont}
Let $x \in D$.  The function ${\rm ev}_{x}:\mathbb{Z}_{\ell}\llbracket T \rrbracket \longrightarrow \mathbb{C}_{\ell}$ given by
$$Q(T) \mapsto {\rm ev}_{x}(Q(T)) = Q(x) $$
is uniformly continuous.
\end{lemma}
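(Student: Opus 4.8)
The plan is to use that $\mathrm{ev}_x$ is a ring homomorphism, so that uniform continuity will reduce to bounding $|R(x)|_\ell$ for $R(T)$ ranging over a high power $\mathfrak{m}^N$ of the maximal ideal of $\mathbb{Z}_\ell\llbracket T\rrbracket$, the topology on the source being the $\mathfrak{m}$-adic one. First I would record that $\mathrm{ev}_x$ is well defined: writing $Q(T) = \sum_{n \ge 0} a_n T^n$ with $a_n \in \mathbb{Z}_\ell$, the hypothesis $|x|_\ell < 1$ gives $|a_n x^n|_\ell \le |x|_\ell^n \to 0$, so $\sum_n a_n x^n$ converges in the complete field $\mathbb{C}_\ell$; moreover the ultrametric inequality yields $|Q(x)|_\ell \le \sup_n |a_n x^n|_\ell \le 1$, so $\mathrm{ev}_x$ maps $\mathbb{Z}_\ell\llbracket T\rrbracket$ into the closed unit ball. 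Since evaluation is additive, $\mathrm{ev}_x(Q_1) - \mathrm{ev}_x(Q_2) = \mathrm{ev}_x(Q_1 - Q_2)$, and therefore it suffices to show that for every $\varepsilon > 0$ there is an $N \in \mathbb{N}$ such that $R(T) \in \mathfrak{m}^N$ forces $|R(x)|_\ell < \varepsilon$; this then gives $|\mathrm{ev}_x(Q_1) - \mathrm{ev}_x(Q_2)|_\ell < \varepsilon$ whenever $Q_1 - Q_2 \in \mathfrak{m}^N$, uniformly in the pair $(Q_1, Q_2)$.

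For the key estimate I would use $\mathfrak{m} = (\ell, T)$, so that $\mathfrak{m}^N$ is generated by the monomials $\ell^i T^{N-i}$ with $0 \le i \le N$, and hence every $R(T) \in \mathfrak{m}^N$ can be written as $R(T) = \sum_{i=0}^N \ell^i T^{N-i} f_i(T)$ with $f_i(T) \in \mathbb{Z}_\ell\llbracket T\rrbracket$. Evaluating at $x$ and using $|f_i(x)|_\ell \le 1$ from the previous step gives
\[
|R(x)|_\ell \le \max_{0 \le i \le N} \ell^{-i}\,|x|_\ell^{\,N-i}\,|f_i(x)|_\ell \le \max_{0 \le i \le N} \ell^{-i}\,|x|_\ell^{\,N-i}.
\]
Setting $v := v_\ell(x)$, which is positive (equal to $+\infty$ when $x = 0$), one has $\ell^{-i}|x|_\ell^{N-i} = \ell^{-(Nv + i(1-v))}$, so the maximum over $0 \le i \le N$ is attained at $i = N$ when $v \ge 1$ and at $i = 0$ when $v \le 1$; in all cases $|R(x)|_\ell \le \ell^{-N\min(1,v)}$.

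Finally, since $\min(1, v) > 0$, the quantity $\ell^{-N\min(1,v)}$ tends to $0$ as $N \to \infty$, so given $\varepsilon > 0$ we may pick $N$ with $\ell^{-N\min(1,v)} < \varepsilon$ and conclude. I do not expect a genuine obstacle here; the only points requiring a bit of care are the nonarchimedean bound $|Q(x)|_\ell \le 1$ for $Q(T) \in \mathbb{Z}_\ell\llbracket T\rrbracket$, which relies on convergence of the series together with the ultrametric inequality applied to an infinite sum, and the observation that the integer $N$ produced above depends on $x$ through $v_\ell(x)$ — the continuity of $\mathrm{ev}_x$ is uniform in the power series argument but not uniform in $x \in D$, which is exactly what the statement claims.
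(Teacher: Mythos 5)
Your proof is correct, and since the paper itself gives no argument here (it simply defers to Lemma 3.2 of the prequel \cite{McGown/Vallieres:2021}), you have supplied the standard direct proof: reduce by additivity of $\mathrm{ev}_x$, decompose $\mathfrak{m}^N=(\ell,T)^N$ via the generators $\ell^i T^{N-i}$, and bound each term using $|f_i(x)|_\ell\le 1$ together with the ultrametric inequality, yielding $|R(x)|_\ell\le \ell^{-N\min(1,v_\ell(x))}\to 0$. This is almost certainly the argument intended by the citation, and your closing remark that the modulus of continuity degrades as $x$ approaches the boundary of $D$ (so uniformity holds in the power series but not in $x$) is a correct and worthwhile observation.
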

\begin{proof}
The proof is the same as the one for Lemma $3.2$ in \cite{McGown/Vallieres:2021}.
\end{proof}
We now obtain the following result.
\begin{proposition} \label{hip}
Given $a \in \mathbb{Z}_{\ell}$ and any $i \in \mathbb{Z}_{\ge 0 }$, we have $Q_{a}(\rho_{\ell^{i}}) = \rho_{\ell^{i}}(a)$.
\end{proposition}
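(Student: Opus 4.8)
The plan is to verify the identity for $a \in \mathbb{N}$ by a direct algebraic computation, and then to pass to general $a \in \mathbb{Z}_\ell$ by a density and continuity argument. First I would check the case $a \in \mathbb{Z}_{\ge 0}$. For such $a$, the power series $Q_a(T)$ is literally the polynomial $1 - (1-T)^a \in \mathbb{Z}[T]$ (with $Q_0(T) = 0$), so evaluating at $T = \rho_{\ell^i} = 1 - \xi_{\ell^i}$ gives
\[
Q_a(\rho_{\ell^i}) = 1 - (1 - \rho_{\ell^i})^a = 1 - \xi_{\ell^i}^a = \rho_{\ell^i}(a),
\]
which is exactly equation~(\ref{eq}) transported through the embedding $\tau$. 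Note $\rho_{\ell^i} \in D$ when $i \ge 1$ by~(\ref{val}), and for $i = 0$ one has $\xi_1 = 1$, so $\rho_1 = 0 \in D$ and both sides vanish; thus the hypothesis $x \in D$ needed to invoke Lemma~\ref{cont} is met in all cases.

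Next I would bootstrap to arbitrary $a \in \mathbb{Z}_\ell$. Fix $i$ and set $x = \rho_{\ell^i} \in D$. Both maps
\[
a \longmapsto Q_a(x) = \mathrm{ev}_x(g(a)) \qquad \text{and} \qquad a \longmapsto \rho_{\ell^i}(a) = 1 - \xi_{\ell^i}^a
\]
are continuous functions $\mathbb{Z}_\ell \to \mathbb{C}_\ell$: the first because $g:\mathbb{Z}_\ell \to \mathbb{Z}_\ell\llbracket T\rrbracket$ is continuous (it is the extension constructed just before the statement) and $\mathrm{ev}_x$ is uniformly continuous by Lemma~\ref{cont}, hence the composite is continuous; the second by the remark preceding the lemma that $a \mapsto 1 - \xi_{\ell^i}^a$ is continuous on $\mathbb{Z}_\ell$. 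These two continuous functions agree on $\mathbb{N}$ (equivalently on $\mathbb{Z}_{\ge 0}$) by the first paragraph, and $\mathbb{N}$ is dense in $\mathbb{Z}_\ell$, so they agree everywhere. This gives $Q_a(\rho_{\ell^i}) = \rho_{\ell^i}(a)$ for all $a \in \mathbb{Z}_\ell$ and all $i \ge 0$.

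The only real subtlety — and the step I would be most careful about — is making sure the evaluation $\mathrm{ev}_x$ is legitimately applicable and continuous at the specific point $x = \rho_{\ell^i}$, i.e. that $\rho_{\ell^i}$ genuinely lies in the open unit disc $D$. This is where~(\ref{val}) is essential: it gives $|\rho_{\ell^i}|_\ell = \ell^{-1/\varphi(\ell^i)} < 1$ for $i \ge 1$. Everything else is a routine "two continuous maps agreeing on a dense set" argument, modeled on the analogous step in \S 3 of \cite{McGown/Vallieres:2021}; no new estimates are required beyond Lemma~\ref{cont} and the continuity of $g$.
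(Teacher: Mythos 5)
Your proof is correct and follows essentially the same route as the paper: verify the identity for $a \in \mathbb{Z}_{\ge 0}$ using equation~(\ref{eq}), then extend to all of $\mathbb{Z}_\ell$ via the density of $\mathbb{N}$ together with continuity of $g$, $\mathrm{ev}_x$ (Lemma~\ref{cont}), and $a \mapsto \rho_{\ell^i}(a)$. The only cosmetic difference is that you absorb the $i=0$ case into the continuity argument by observing $\rho_1 = 0 \in D$, whereas the paper dispatches $i=0$ separately as an obvious equality; both are fine.
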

\begin{proof}
If $i=0$, the equality is clear.  If $i \ge 1$, then the functions $\mathbb{Z}_{\ell} \longrightarrow \mathbb{C}_{\ell}$ defined via
$$a \mapsto \rho_{\ell^{i}}(a) \text{ and } a \mapsto {\rm ev}_{\rho_{\ell^{i}}} \circ g (a) $$
are continuous by (\ref{val}) and Lemma \ref{cont}.  They both agree on $\mathbb{N}$ by (\ref{eq}).  The result follows as once since $\mathbb{N}$ is dense in $\mathbb{Z}_{\ell}$.  
\end{proof}

\begin{remark} \label{difptview}
We note in passing that we can think about the power series $Q_{a}(T)$ as follows.  Consider the unital commutative ring ${\rm Int}(\mathbb{Z})\llbracket T \rrbracket$.  Then, we have
$$Q_{X}(T) = \sum_{k=1}^{\infty} e_{k}(X)T^{k} \in {\rm Int}(\mathbb{Z})\llbracket T \rrbracket. $$
By Proposition \ref{Lipschitz}, we have that ${\rm Int}(\mathbb{Z})$ is a subring of the ring $\mathcal{C}(\mathbb{Z}_{\ell},\mathbb{Z}_{\ell})$ of $\mathbb{Z}_{\ell}$-valued continuous functions on $\mathbb{Z}_{\ell}$ for all rational primes $\ell$.  Thus, we also have
$$Q_{X}(T) \in \mathcal{C}(\mathbb{Z}_{\ell},\mathbb{Z}_{\ell}) \llbracket T \rrbracket. $$
The power series $Q_{a}(T)$ for $a \in \mathbb{Z}_{\ell}$ are obtained by setting $X=a$, that is
$$Q_{a}(T)  = \sum_{k=1}^{\infty} e_{k}(a)T^{k} \in \mathbb{Z}_{\ell} \llbracket T\rrbracket. $$
On the other hand, the power series $P_{a}(T) \in \mathbb{Z}_{\ell}\llbracket T\rrbracket$ arising in \cite{McGown/Vallieres:2021} are related to
$$P_{X}(T) = \sum_{k=1}^{\infty}d_{k}(X)T^{k} \in {\rm Int}_{even}(\mathbb{Z})\llbracket T \rrbracket,$$
where ${\rm Int}_{even}(\mathbb{Z})$ is the subring of ${\rm Int}(\mathbb{Z})$ consisting of even integer-valued polynomials, and 
$$d_{k}(X) = (-1)^{k-1}\binom{X+k-1}{2k-1}\frac{X}{k}. $$
Note that the $d_{k}(X)$ (along with $\binom{X}{0}$) form a $\mathbb{Z}$-basis for ${\rm Int}_{even}(\mathbb{Z})$.  (See problem $89$ of Chapter $2$, Part Eight in \cite{Polya/Szego:1998}.)
\end{remark}
\begin{remark}
For the following discussion, let $\ell$ be an odd rational prime and recall the identity
$$1 - X^{\ell} = \prod_{k=0}^{\ell -1}(1 - \xi_{\ell}^{k}X). $$
Thus, if $a \in \mathbb{Z}_{\ell}^{\times}$ and $n \in \mathbb{N}$, then
$$1 - (\xi_{\ell^{n}}^{a})^{\ell} =   \prod_{k=0}^{\ell -1}(1 - \xi_{\ell}^{k}\xi_{\ell^{n}}^{a}).$$
Let now $F_{n} = \mathbb{Q}_{\ell}(\xi_{\ell^{n}})$ and consider the norm map $N_{n}:F_{n} \longrightarrow F_{n-1}$ for $n \ge 2$.  We have
\begin{equation} \label{norm}
N_{n}(\rho_{\ell^{n}}(a)) = \prod_{k=0}^{\ell -1}(1 - \xi_{\ell}^{k}\xi_{\ell^{n}}^{a}) = \rho_{\ell^{n-1}}(a).  
\end{equation}
Letting
$$u_{n}(a) = \frac{\rho_{\ell^{n}}(a)}{ \rho_{\ell^{n}}}, $$
we have $u_{n}(a) \in E(F_{n})$, where $E(F_{n})$ is the group of units of the ring of integers of $F_{n}$.  By  (\ref{norm}), we get
$$N_{n}(u_{n}(a)) = u_{n-1}(a). $$
Therefore, the power series
$$R_{a}(T) = \frac{Q_{a}(T)}{T} \in \mathbb{Z}_{\ell} \llbracket T \rrbracket^{\times} $$
satisfies
$$R_{a}(\rho_{\ell^{n}}) = u_{n}(a), $$
for all $n \ge 1$.  It follows that the power series $R_{a}(T)$, when $a \in \mathbb{Z}_{\ell}^{\times}$, are particular examples of the ones studied in \cite{Coleman:1979}.

\end{remark}

%\section{Voltages and their derived multigraphs}
\section{Voltage assignments and their derived multigraphs} \label{voltage}
We recall from \S $2.2$ of \cite{Vallieres:2021} how we think about multigraphs, but our notation will be slightly different.  Our main reference is \cite{Sunada:2013}.  A directed multigraph $X$ consists of a set of vertices $V_{X}$ and a set of directed edges $\vec{E}_{X}$ with a function
$${\rm inc}:\vec{E}_{X} \longrightarrow V_{X} \times V_{X}$$
called the incidence map.  For $e \in \vec{E}_{X}$, we write
$${\rm inc}(e) = (o(e),t(e)),$$
so that we obtain two functions $o,t:\vec{E}_{X} \longrightarrow V_{X}$ called the origin and the terminus maps.  An undirected multigraph $X$, or simply a multigraph, consists of a directed multigraph as above with another function
$${\rm inv}:\vec{E}_{X} \longrightarrow \vec{E}_{X}$$
called the inversion map which satisfies
\begin{enumerate}
\item ${\rm inv}^{2} = {\rm id}_{\vec{E}_{X}}$,
\item ${\rm inv}(e) \neq e$ for all $e \in \vec{E}_{X}$,
\item ${\rm inc}({\rm inv}(e)) = \iota({\rm inc}(e))$ for all $e \in \vec{E}_{X}$,
\end{enumerate}
where $\iota:V_{X} \times V_{X} \longrightarrow V_{X} \times V_{X}$ is defined via $\iota(v,w) = (w,v)$.  If $v \in V_{X}$, we let
$$E_{v} = \{e \in \vec{E}_{X} \, | \, o(e) = v \}, $$
and the valency (or degree) of a vertex is defined to be 
$${\rm val}_{X}(v) = |E_{v}|. $$
The quotient $E_{X} = \vec{E}_{X}/\langle {\rm inv} \rangle$ is the set of undirected edges.  A multigraph is called finite if both $V_{X}$ and $E_{X}$ are finite sets.  \emph{All multigraphs arising in this paper will be finite and without any vertex of valency one, unless otherwise stated.}

Note that given a directed multigraph, one can always obtain an (undirected) multigraph by ``forgetting'' the directions.  Formally, one adds for each directed edge another directed edge in the opposite direction and the map ${\rm inv}$ sends a directed edge to this new directed edge with opposite direction.  The details are left to the reader.

We now explain the notion of voltage assignment and their derived multigraphs in a way that is convenient for us.  Our main reference for voltage assignments is Chapter $2$ of \cite{Gross/Tucker:2001}.  Let $X$ be a connected multigraph and pick a section $\omega:E_{X} \longrightarrow \vec{E}_{X}$ of the natural map $\vec{E}_{X} \longrightarrow E_{X}$.  In other words, we fix a direction for each of the undirected edges of $X$.  Let 
$$S = \omega (E_{X}) \subseteq \vec{E}_{X}.$$  
If $G$ is an additive finite group, a voltage assignment with values in $G$ is a function
$$\alpha:S \longrightarrow G. $$
To each such voltage assignment, one can associate a derived multigraph which we denote by $X(G,S,\alpha)$.  The multigraph $X(G,S,\alpha)$ is obtained as follows.  The vertices of $X(G,S,\alpha)$ consist of $V_{X} \times G$ and the edges consist of $S \times G$.  An edge $(s,\sigma) \in S \times G$ connects the vertex $(o(s),\sigma)$ to the vertex $(t(s),\sigma + \alpha(s))$.  In this way, we get a directed multigraph, but we forget about the directions and think of $X(G,S,\alpha)$ as a multigraph.  Note that the choice of directions for the loops of the base multigraph does not affect the resulting multigraph $X(G,S,\alpha)$.  If $X$ is a bouquet, then the multigraphs $X(G,S,\alpha)$ are the Cayley-Serre multigraphs used in \cite{McGown/Vallieres:2021}  and \cite{Vallieres:2021}.

For instance, if we let $X$ be the dumbbell multigraph
\begin{center}
\begin{tikzpicture}
\node [circle, draw, fill=white] (b1) at (1,1) {};
\node [circle, draw, fill=white] (b2) at (2,1) {};

\node (c) at (1.16,1) {};
\fill (c) circle[radius=0.7pt];

\node (d) at (1.84,1) {};
\fill (d) circle[radius=0.7pt];

\draw (b1) -- (b2);
\end{tikzpicture},
\end{center}
the section be
\begin{center}
\begin{tikzpicture}
\node [label = {\small $s_{1}$},circle, draw, fill=white] (b1) at (1,1) {};
\node [label = {\small $s_{3}$}, circle, draw, fill=white] (b2) at (2,1) {};

\node (c) at (1.16,1) {};
\fill (c) circle[radius=0.7pt];

\node (d) at (1.84,1) {};
\fill (d) circle[radius=0.7pt];

\draw[->-=.6] (b1) -- (b2) node [midway,above] {\small $s_{2}$};
\end{tikzpicture},
\end{center}
and the function $\alpha:S \longrightarrow \mathbb{Z}/5\mathbb{Z}$ be defined via $\alpha(s_{1}) = \bar{1}$, $\alpha(s_{2}) = \bar{0}$ and $\alpha(s_{3}) = \bar{2}$, then the graph $X(\mathbb{Z}/5\mathbb{Z},S,\alpha)$ is the Petersen graph
\begin{center}
\begin{tikzpicture}[baseline={([yshift=-0.6ex] current bounding box.center)}]
% create the node
\node[draw=none,minimum size=2cm,regular polygon,regular polygon sides=5] (a) {};
\node[draw=none, minimum size=1.1cm,regular polygon,regular polygon sides=5] (b) {};

% draw a black dot in each vertex
\foreach \x in {1,2,...,5}
  \fill (a.corner \x) circle[radius=0.7pt];
  
\foreach \y in {1,2,...,5}
  \fill (b.corner \y) circle[radius=0.7pt];
  
\path (a.corner 1) edge (a.corner 2);
\path (a.corner 2) edge (a.corner 3);
\path (a.corner 3) edge (a.corner 4);
\path (a.corner 4) edge (a.corner 5);
\path (a.corner 5) edge (a.corner 1);

\path (a.corner 1) edge (b.corner 1);
\path (a.corner 2) edge (b.corner 2);
\path (a.corner 3) edge (b.corner 3);
\path (a.corner 4) edge (b.corner 4);
\path (a.corner 5) edge (b.corner 5);

\path (b.corner 1) edge (b.corner 3);
\path (b.corner 1) edge (b.corner 4);
\path (b.corner 2) edge (b.corner 4);
\path (b.corner 2) edge (b.corner 5);
\path (b.corner 3) edge (b.corner 5);

\end{tikzpicture}
\end{center}

From now on, if $X$ is a connected multigraph and we say ``Let $\alpha:S \longrightarrow G$ be a function\ldots'', it will be understood that a section $\omega$ has been chosen and that $S = \omega(E_{X})$.  Now, if we start with a connected multigraph $X$ and a function $\alpha:S \longrightarrow \mathbb{Z}_{\ell}$, then for each $n \in \mathbb{N}$, we consider the voltage assignment
$$\alpha_{n}:S \longrightarrow \mathbb{Z}/\ell^{n}\mathbb{Z}$$
obtained from the composition
$$S \stackrel{\alpha}{\longrightarrow} \mathbb{Z}_{\ell} \longrightarrow \mathbb{Z}_{\ell}/\ell^{n} \mathbb{Z}_{\ell} \stackrel{\simeq}{\longrightarrow} \mathbb{Z}/\ell^{n}\mathbb{Z}.$$
If we assume that all the multigraphs $X(\mathbb{Z}/\ell^{n}\mathbb{Z},S,\alpha_{n})$ are connected, then we get an abelian $\ell$-tower of multigraphs
$$X  \longleftarrow X(\mathbb{Z}/\ell \mathbb{Z},S,\alpha_{1}) \longleftarrow X(\mathbb{Z}/\ell^{2}\mathbb{Z},S,\alpha_{2}) \longleftarrow \ldots \longleftarrow X(\mathbb{Z}/\ell^{n}\mathbb{Z},S,\alpha_{n}) \longleftarrow \ldots$$
in the sense of Definition $4.1$ of \cite{Vallieres:2021}.

For example, if we take $\ell = 5$, $X$ and $S$ as above, and the function $\alpha:S \longrightarrow \mathbb{Z} \subseteq \mathbb{Z}_{5}$ defined by $\alpha(s_{1}) = 1$, $\alpha(s_{2}) = 0$, and $\alpha(s_{3}) = 2$, then we have
$$X(\mathbb{Z}/5^{n}\mathbb{Z},S,\alpha_{n}) = G(5^{n},2),$$
where $G(5^{n},2)$ is the generalized Petersen graph introduced in \cite{Coxeter:1950} and \cite{Watkins:1969}.  We then get an abelian $5$-tower 
\begin{equation*}
\begin{tikzpicture}[baseline={([yshift=-0.6ex] current bounding box.center)}]
\node [circle, draw, fill=white] (b1) at (1,1) {};
\node [circle, draw, fill=white] (b2) at (2,1) {};

\node (c) at (1.16,1) {};
\fill (c) circle[radius=0.7pt];

\node (d) at (1.84,1) {};
\fill (d) circle[radius=0.7pt];

\draw (b1) -- (b2);
\end{tikzpicture}
\, \, \longleftarrow \, \, \,
\begin{tikzpicture}[baseline={([yshift=-0.6ex] current bounding box.center)}]
% create the node
\node[draw=none,minimum size=2cm,regular polygon,regular polygon sides=5] (a) {};
\node[draw=none, minimum size=1.3cm,regular polygon,regular polygon sides=5] (b) {};

% draw a black dot in each vertex
\foreach \x in {1,2,...,5}
  \fill (a.corner \x) circle[radius=0.7pt];
  
\foreach \y in {1,2,...,5}
  \fill (b.corner \y) circle[radius=0.7pt];
  
\path (a.corner 1) edge (a.corner 2);
\path (a.corner 2) edge (a.corner 3);
\path (a.corner 3) edge (a.corner 4);
\path (a.corner 4) edge (a.corner 5);
\path (a.corner 5) edge (a.corner 1);

\path (a.corner 1) edge (b.corner 1);
\path (a.corner 2) edge (b.corner 2);
\path (a.corner 3) edge (b.corner 3);
\path (a.corner 4) edge (b.corner 4);
\path (a.corner 5) edge (b.corner 5);

\path (b.corner 1) edge (b.corner 3);
\path (b.corner 1) edge (b.corner 4);
\path (b.corner 2) edge (b.corner 4);
\path (b.corner 2) edge (b.corner 5);
\path (b.corner 3) edge (b.corner 5);

\end{tikzpicture}
\, \, \, \longleftarrow 
\begin{tikzpicture}[baseline={([yshift=-0.6ex] current bounding box.center)}]
% create the node
\node[draw=none,minimum size=2cm,regular polygon,regular polygon sides=25] (a) {};
\node[draw=none, minimum size=1.7cm,regular polygon,regular polygon sides=25] (b) {};

% draw a black dot in each vertex
\foreach \x in {1,2,...,25}
  \fill (a.corner \x) circle[radius=0.7pt];
  
\foreach \y in {1,2,...,25}
  \fill (b.corner \y) circle[radius=0.7pt];
  
\foreach \x\z in {1/2,2/3,3/4,4/5,5/6,6/7,7/8,8/9,9/10,10/11,11/12,12/13,13/14,14/15,15/16,16/17,17/18,18/19,19/20,20/21,21/22,22/23,23/24,24/25,25/1}
  \path (a.corner \x) edge (a.corner \z);  
 
\foreach \x\z in {1/1,2/2,3/3,4/4,5/5,6/6,7/7,8/8,9/9,10/10,11/11,12/12,13/13,14/14,15/15,16/16,17/17,18/18,19/19,20/20,21/21,22/22,23/23,24/24,25/25} 
  \path (a.corner \x) edge (b.corner \z);
  
\foreach \x\z in {1/3,2/4,3/5,4/6,5/7,6/8,7/9,8/10,9/11,10/12,11/13,12/14,13/15,14/16,15/17,16/18,17/19,18/20,19/21,20/22,21/23,22/24,23/25,24/1,25/2} 
  \path (b.corner \x) edge [bend left=50] (b.corner \z);
  
%\path (a.corner 1) edge (a.corner 2);
%\path (a.corner 2) edge (a.corner 3);
%\path (a.corner 3) edge (a.corner 4);
%\path (a.corner 4) edge (a.corner 5);
%\path (a.corner 5) edge (a.corner 1);

%\path (a.corner 1) edge (b.corner 1);
%\path (a.corner 2) edge (b.corner 2);
%\path (a.corner 3) edge (b.corner 3);
%\path (a.corner 4) edge (b.corner 4);
%\path (a.corner 5) edge (b.corner 5);

%\path (b.corner 1) edge (b.corner 3);
%\path (b.corner 1) edge (b.corner 4);
%\path (b.corner 2) edge (b.corner 4);
%\path (b.corner 2) edge (b.corner 5);
%\path (b.corner 3) edge (b.corner 5);

\end{tikzpicture}
\longleftarrow \ldots
\end{equation*}
Note that since $X$ is $3$-regular, then so are $G(5^{n},2)$ for all $n \ge 1$.

%\section{The special value at $u=1$ of Artin-Ihara $L$-functions}
\section{The special value at $u=1$ of Artin-Ihara $L$-functions} \label{ihara}
We now recall what we need to know about Artin-Ihara $L$-functions for our present purposes.  Our main reference for this section is \cite{Terras:2011}, but our notation will be the same as in \S $2$ of \cite{Vallieres:2021}.  Let $Y/X$ be an abelian cover of connected multigraphs and let $G = {\rm Gal}(Y/X)$.  If $\psi \in \widehat{G} = {\rm Hom}_{\mathbb{Z}}(G,\mathbb{C}^{\times})$, then we denote the corresponding Artin-Ihara $L$-function by $L_{X}(u,\psi)$.  Let us introduce a labeling of the vertices $V_{X} = \{v_{1},\ldots,v_{g} \}$ and for each $i=1,\ldots,g$, let us fix a vertex $w_{i} \in V_{Y}$ above $v_{i}$.  Recall the following definition (Definition $18.13$ in \cite{Terras:2011}).
\begin{definition} \label{artinian}
With the same notation as above,
\begin{enumerate}
\item For $\sigma \in G$, we let $A(\sigma)=(a_{ij}(\sigma))$ be the $g \times g$ matrix defined via
\begin{equation*}
a_{ij}(\sigma) = 
\begin{cases}
\text{Twice the number of undirected loops at the vertex }w_{i}, &\text{ if } i=j \text{ and } \sigma = 1;\\
\text{The number of undirected edges connecting $w_{i}$ to $w_{j}^{\sigma}$}, &\text{ otherwise}.
\end{cases}
\end{equation*}
\item If $\psi \in \widehat{G}$, then we let
$$A_{\psi} = \sum_{\sigma \in G} \psi(\sigma) \cdot A(\sigma). $$
\end{enumerate}
\end{definition}
If $D = (d_{ij})$ denotes the valency matrix attached to $X$, that is $D$ is diagonal and $d_{ii} = {\rm val}_{X}(v_{i})$, then the three-term determinant formula for the Artin-Ihara $L$-function (Theorem $18.15$ in \cite{Terras:2011}) gives
$$\frac{1}{L_{X}(u,\psi)} = (1-u^{2})^{-\chi(X)} \cdot {\rm det}(I - A_{\psi}u + (D-I)u^{2}),$$
where $\chi(X)$ is the Euler characteristic of $X$.  As in \cite{Vallieres:2021}, we let
\begin{equation*} 
h_{X}(u,\psi) = {\rm det}(I - A_{\psi}u + (D-I)u^{2}) \in \mathbb{Z}[\psi][u], 
\end{equation*}
where $\mathbb{Z}[\psi]$ is the ring of integers in the cyclotomic number field $\mathbb{Q}(\psi)$.  Evaluating at $u=1$ gives
\begin{equation}\label{mot}
h_{X}(1,\psi) = {\rm det}(D - A_{\psi}) \in \mathbb{Z}[\psi].
\end{equation}  
We let the absolute Galois group $G_{\mathbb{Q}}$ of $\mathbb{Q}$ acts on $\widehat{G}$ as usual.  Assuming $\chi(X) \neq 0$, equation ($7$) in \cite{Vallieres:2021} shows that
$$|G| \cdot \kappa_{Y} = \kappa_{X} \prod_{\Psi \neq \Psi_{0}} h_{X}(1,\Psi), $$
where $\kappa$ denotes the number of spanning trees of a multigraph, the product is over all non-trivial orbits $\Psi \in G_{\mathbb{Q}} \backslash \widehat{G}$, and 
$$h_{X}(1,\Psi) = \prod_{\psi \in \Psi} h_{X}(1,\psi) \in \mathbb{Z}. $$

\emph{From now on, we assume that $\chi(X) \neq 0$.}  (The case where $\chi(X) = 0$ was treated separately.  See the discussion after Definition $4.1$ of \cite{Vallieres:2021}.)
If 
$$X = X_{0} \longleftarrow X_{1} \longleftarrow X_{2} \longleftarrow \ldots \longleftarrow X_{n} \longleftarrow \ldots$$
is an abelian $\ell$-tower consisting of connected multigraphs above $X$, then for each $i=1,2,\ldots$, we let $G_{i} = {\rm Gal}(X_{i}/X) \simeq \mathbb{Z}/\ell^{i}\mathbb{Z}$.  Furthermore, we let $\psi_{i} \in \widehat{G}_{i}$ be the faithful character satisfying $\psi_{i}(\bar{1}) = \zeta_{\ell^{i}}$.  Equation ($12$) of \cite{Vallieres:2021} adapted to the case where $\kappa_{X} \neq 1$ gives
\begin{equation} \label{key_formula}
{\rm ord}_{\ell}(\kappa_{n}) = {\rm ord}_{\ell}(\kappa_{X})-n + \sum_{i=1}^{n}{\rm ord}_{\mathcal{L}_{i}}(h_{X}(1,\psi_{i})),
\end{equation}
where $\kappa_{n}$ is the number of spanning trees of $X_{n}$.

\begin{lemma}\label{key_prop}
Let $X$ be a connected multigraph and $G$ an additive finite group.  Let $\alpha:S \longrightarrow G$ be a voltage assignment for which $X(G,S,\alpha)$ is connected.  Choose a labeling of the vertices of $X$, say $V_{X} = \{ v_{1},\ldots,v_{g}\}$ and let $w_{i} = (v_{i},0_{G}) \in V_{X(G,S,\alpha)}$, where $0_{G}$ is the neutral element of $G$.  For each $i,j \in \{1,\ldots,g \}$ and $\sigma \in G$, let
$$b_{ij}(\sigma) = |\{s \in S \, | \, {\rm inc}(s) = (v_{i},v_{j}) \text{ and } \alpha(s) = \sigma \}|$$
and
$$c_{ij}(\sigma) = |\{s \in S \, | \, {\rm inc}(s) = (v_{j},v_{i}) \text{ and } \alpha(s) = - \sigma \}|. $$
Then, the entries $a_{ij}(\sigma)$ of the matrix $A(\sigma)$ of Definition \ref{artinian} satisfy
$$a_{ij}(\sigma) = b_{ij}(\sigma) + c_{ij}(\sigma). $$
\end{lemma}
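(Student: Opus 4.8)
The plan is to unwind Definition \ref{artinian} directly in terms of the combinatorial description of $X(G,S,\alpha)$ recalled in \S\ref{voltage}. Recall that the undirected edges of $X(G,S,\alpha)$ are indexed by $S\times G$, the edge labelled $(s,x)$ having as its two endpoints $(o(s),x)$ and $(t(s),x+\alpha(s))$, and that the natural action of $\sigma\in G={\rm Gal}(X(G,S,\alpha)/X)$ on $V_{X(G,S,\alpha)}=V_{X}\times G$ sends $(v,h)$ to $(v,h+\sigma)$, so that $w_{j}^{\sigma}=(v_{j},\sigma)$. Hence, as long as $\sigma\neq 0_{G}$ or $i\neq j$, the number $a_{ij}(\sigma)$ is just the number of pairs $(s,x)\in S\times G$ with $\{(o(s),x),(t(s),x+\alpha(s))\}=\{(v_{i},0_{G}),(v_{j},\sigma)\}$.

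The heart of the argument is the elementary observation that such a pair $(s,x)$ is of exactly one of two kinds: either ${\rm inc}(s)=(v_{i},v_{j})$, $x=0_{G}$ and $\alpha(s)=\sigma$, or ${\rm inc}(s)=(v_{j},v_{i})$, $x=\sigma$ and $\alpha(s)=-\sigma$. In the first kind the value of $x$ is forced, so the number of pairs of that kind is precisely $b_{ij}(\sigma)$; in the second kind it is precisely $c_{ij}(\sigma)$. I would then check that these two families of edges are disjoint in the stated range: if a single pair were of both kinds we would get $0_{G}=x=\sigma$, which together with the incompatibility of ${\rm inc}(s)=(v_{i},v_{j})$ and ${\rm inc}(s)=(v_{j},v_{i})$ when $v_{i}\neq v_{j}$ rules this out; and when $\sigma=0_{G}$ (so $i\neq j$) the two families still have distinct edge labels because their underlying $s$'s have opposite incidences. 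This yields $a_{ij}(\sigma)=b_{ij}(\sigma)+c_{ij}(\sigma)$ whenever $\sigma\neq0_{G}$ or $i\neq j$.

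The one remaining case, $i=j$ and $\sigma=0_{G}$, is where I expect whatever subtlety there is, since there Definition \ref{artinian} prescribes \emph{twice} the number of undirected loops of $X(G,S,\alpha)$ at $w_{i}$. A pair $(s,x)$ is a loop at $(v_{i},0_{G})$ exactly when ${\rm inc}(s)=(v_{i},v_{i})$, $x=0_{G}$ and $\alpha(s)=0_{G}$, so the number of such loops equals $b_{ii}(0_{G})$; since $c_{ii}(0_{G})$ counts the same set (as $-0_{G}=0_{G}$), one gets $a_{ii}(0_{G})=2b_{ii}(0_{G})=b_{ii}(0_{G})+c_{ii}(0_{G})$. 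A closely related bookkeeping point, which is really the only thing one must be careful about in the generic case as well, is that when $\sigma$ has order two in $G$ a single loop $s$ at $v_{i}$ with $\alpha(s)=\sigma=-\sigma$ gives rise to two distinct undirected edges, labelled $(s,0_{G})$ and $(s,\sigma)$, joining $(v_{i},0_{G})$ to $(v_{i},\sigma)$; this is exactly matched by the fact that $b_{ii}(\sigma)$ and $c_{ii}(\sigma)$ then each count that loop once. Apart from these points the verification is routine, so I do not expect any genuine obstacle.
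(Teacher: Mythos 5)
Your proof is correct and takes the same approach as the paper: the paper's own proof is a one-line remark that the identity follows from the definition of the derived multigraph, and you have simply unwound that definition in full detail, including the correct treatment of the degenerate case $i=j$, $\sigma=0_G$ (where the factor of two matches $b_{ii}(0_G)=c_{ii}(0_G)$) and the bookkeeping for loops whose voltage has order two. Nothing further is needed.
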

\begin{proof}
This follows from the definition of the derived multigraphs $X(G,S,\alpha)$ given in \S \ref{voltage}.
\end{proof}
\begin{corollary} \label{co}
With the same notation as in Lemma \ref{key_prop}, the matrix $A_{\psi}$ of Definition \ref{artinian} satisfies
$$A_{\psi} = \left(\sum_{\substack{s \in S \\ {\rm inc}(s) = (v_{i},v_{j})}}\psi(\alpha(s)) + \sum_{\substack{s \in S \\ {\rm inc}(s) = (v_{j},v_{i})}}\psi(-\alpha(s)) \right). $$
\end{corollary}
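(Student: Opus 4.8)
The plan is to simply unfold the definitions and combine the two lemmas already at hand. By part (2) of Definition \ref{artinian}, we have $A_\psi = \sum_{\sigma \in G}\psi(\sigma) A(\sigma)$, so the $(i,j)$-entry of $A_\psi$ is $\sum_{\sigma \in G}\psi(\sigma)a_{ij}(\sigma)$. First I would substitute the formula $a_{ij}(\sigma) = b_{ij}(\sigma) + c_{ij}(\sigma)$ from Lemma \ref{key_prop}, which splits the entry into two sums:
\[
(A_\psi)_{ij} = \sum_{\sigma \in G}\psi(\sigma)b_{ij}(\sigma) + \sum_{\sigma \in G}\psi(\sigma)c_{ij}(\sigma).
\]
(One should note that the $i=j$, $\sigma=1$ case in Definition \ref{artinian}, "twice the number of undirected loops", is already correctly accounted for by Lemma \ref{key_prop}: each undirected loop at $v_i$ has its chosen orientation $s \in S$ counted once in $b_{ii}(0_G)$ via ${\rm inc}(s)=(v_i,v_i)$ and once in $c_{ii}(0_G)$ via the same incidence with $\alpha(s) = -0_G = 0_G$, giving the factor of two. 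This is exactly the content of Lemma \ref{key_prop}, so nothing extra is needed here.)

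Next I would rewrite each of the two sums as a sum over $S$ rather than over $G$. For the first sum, $\sum_{\sigma \in G}\psi(\sigma)b_{ij}(\sigma)$, the quantity $b_{ij}(\sigma)$ counts the edges $s \in S$ with ${\rm inc}(s) = (v_i,v_j)$ and $\alpha(s) = \sigma$; hence
\[
\sum_{\sigma \in G}\psi(\sigma)b_{ij}(\sigma) = \sum_{\substack{s \in S \\ {\rm inc}(s)=(v_i,v_j)}}\psi(\alpha(s)),
\]
since each such $s$ contributes $\psi(\alpha(s))$ exactly once, grouped according to the value of $\alpha(s)$. Similarly, $c_{ij}(\sigma)$ counts $s \in S$ with ${\rm inc}(s) = (v_j,v_i)$ and $\alpha(s) = -\sigma$, i.e. $\sigma = -\alpha(s)$, so
\[
\sum_{\sigma \in G}\psi(\sigma)c_{ij}(\sigma) = \sum_{\substack{s \in S \\ {\rm inc}(s)=(v_j,v_i)}}\psi(-\alpha(s)).
\]
Adding these gives precisely the claimed formula for the $(i,j)$-entry of $A_\psi$, and since $i,j$ were arbitrary, the matrix identity follows.

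There is really no substantive obstacle here; the content is entirely bookkeeping, and the one point that deserves a moment's care is making sure the diagonal loop convention is consistent between Definition \ref{artinian} and Lemma \ref{key_prop}, which it is. I would therefore present the proof as a short two-line computation: expand $A_\psi$ using Definition \ref{artinian}(2) and Lemma \ref{key_prop}, then reindex each resulting sum from $G$ to $S$ using the definitions of $b_{ij}$ and $c_{ij}$.

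\begin{proof}
By Lemma \ref{key_prop} and part (2) of Definition \ref{artinian}, the $(i,j)$-entry of $A_\psi$ is
\[
\sum_{\sigma \in G}\psi(\sigma)a_{ij}(\sigma) = \sum_{\sigma \in G}\psi(\sigma)b_{ij}(\sigma) + \sum_{\sigma \in G}\psi(\sigma)c_{ij}(\sigma).
\]
By the definition of $b_{ij}(\sigma)$, grouping the edges $s \in S$ with ${\rm inc}(s) = (v_i,v_j)$ according to the value $\sigma = \alpha(s)$ yields
\[
\sum_{\sigma \in G}\psi(\sigma)b_{ij}(\sigma) = \sum_{\substack{s \in S \\ {\rm inc}(s) = (v_i,v_j)}}\psi(\alpha(s)).
\]
Similarly, by the definition of $c_{ij}(\sigma)$, grouping the edges $s \in S$ with ${\rm inc}(s) = (v_j,v_i)$ according to the value $\sigma = -\alpha(s)$ yields
\[
\sum_{\sigma \in G}\psi(\sigma)c_{ij}(\sigma) = \sum_{\substack{s \in S \\ {\rm inc}(s) = (v_j,v_i)}}\psi(-\alpha(s)).
\]
Adding the last two displays gives the asserted expression for the $(i,j)$-entry of $A_\psi$, and since $i$ and $j$ were arbitrary, the result follows.
\end{proof}
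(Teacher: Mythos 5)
Your proof is correct and is essentially the same argument as the paper's: expand the $(i,j)$-entry of $A_\psi$ using Definition \ref{artinian}(2), split it via Lemma \ref{key_prop} into the $b_{ij}$ and $c_{ij}$ sums, and reindex each from $G$ to $S$. The parenthetical aside about the diagonal loop convention is accurate but not needed, since it is already part of the content of Lemma \ref{key_prop}, which you invoke as a black box.
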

\begin{proof}
The $ij$-th entry of $A_{\psi}$ is given by
\begin{equation*}
\begin{aligned}
\sum_{\sigma \in G} \psi(\sigma) a_{ij}(\sigma) &= \sum_{\sigma \in G}\psi(\sigma)b_{ij}(\sigma) + \sum_{\sigma \in G}\psi(\sigma)c_{ij}(\sigma) \\
&=\sum_{\substack{s \in S \\ {\rm inc}(s) = (v_{i},v_{j})}}\psi(\alpha(s)) + \sum_{\substack{s \in S \\ {\rm inc}(s) = (v_{j},v_{i})}}\psi(-\alpha(s)),
\end{aligned}
\end{equation*}
by Lemma \ref{key_prop}, and this is what we wanted to show.
\end{proof}
\begin{remark}
If $X$ is a bouquet, then the last corollary specializes to Theorem $5.2$ of \cite{Vallieres:2021}.
\end{remark}
We will now apply Lemma \ref{key_prop} and Corollary \ref{co} to the successive layers in an abelian $\ell$-tower of multigraphs.
\begin{proposition} \label{useful}
Let $X$ be a connected multigraph, $\ell$ a rational prime, and let $\alpha:S \longrightarrow \mathbb{Z}_{\ell}$ be a function for which all of the multigraphs $X(\mathbb{Z}/\ell^{n}\mathbb{Z},S,\alpha_{n})$ are connected.  Label the vertices $V_{X} = \{ v_{1},\ldots,v_{g}\}$.  For each $n \ge 1$, we let
$$w_{i,n} = (v_{i},\bar{0}) \in V_{X(\mathbb{Z}/\ell^{n}\mathbb{Z},S,\alpha_{n})}. $$  
For $i,j \in \{ 1,\ldots,g\}$, let us define the integers
$$B_{ij} = |\{s \in S \, | \, {\rm inc}(s) = (v_{i},v_{j}) \} |$$
and
$$C_{ij} =|\{ s \in S \, | \, {\rm inc}(s) = (v_{j},v_{i})\}|. $$
Then, for $k=0,1,2,\ldots$, we have
$$\tau(D - A_{\psi_{k}}) = \left( d_{ij} - B_{ij} - C_{ij} + \sum_{\substack{s \in S \\ {\rm inc}(s) = (v_{i},v_{j})}} \rho_{\ell^{k}}(\alpha(s)) +  \sum_{\substack{s \in S \\ {\rm inc}(s) = (v_{j},v_{i})}} \rho_{\ell^{k}}(-\alpha(s))\right). $$
\end{proposition}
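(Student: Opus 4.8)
The plan is to reduce the statement to Corollary~\ref{co} applied to each cover in the tower and then to transport the resulting identity from $\mathbb{C}$ to $\mathbb{C}_{\ell}$ via the fixed embedding $\tau$. Fix $k \geq 1$ (the case $k = 0$ is the degenerate one, handled at the end). Apply Corollary~\ref{co} to the cover $X(\mathbb{Z}/\ell^{k}\mathbb{Z},S,\alpha_{k})/X$, with group $G_{k} = \mathbb{Z}/\ell^{k}\mathbb{Z}$, voltage assignment $\alpha_{k}$, and character $\psi_{k}$. Since the chosen vertices $w_{i,k} = (v_{i},\bar{0})$ are precisely the $w_{i} = (v_{i},0_{G})$ used in Lemma~\ref{key_prop}, Corollary~\ref{co} yields
$$A_{\psi_{k}} = \left( \sum_{\substack{s \in S \\ {\rm inc}(s) = (v_{i},v_{j})}} \psi_{k}(\alpha_{k}(s)) + \sum_{\substack{s \in S \\ {\rm inc}(s) = (v_{j},v_{i})}} \psi_{k}(-\alpha_{k}(s)) \right).$$
As $D$ has integer entries, $\tau$ fixes it, so everything comes down to computing $\tau$ of the summands $\psi_{k}(\pm\alpha_{k}(s))$.

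The key step is the identity $\tau(\psi_{k}(\alpha_{k}(s))) = 1 - \rho_{\ell^{k}}(\alpha(s))$, and likewise with $-\alpha(s)$. Indeed, let $m \in \mathbb{Z}$ be any representative of $\alpha_{k}(s) \in \mathbb{Z}/\ell^{k}\mathbb{Z}$, so that $m \equiv \alpha(s) \pmod{\ell^{k}}$ in $\mathbb{Z}_{\ell}$. Since $\psi_{k}(\bar{1}) = \zeta_{\ell^{k}}$ we have $\psi_{k}(\alpha_{k}(s)) = \zeta_{\ell^{k}}^{m}$, hence $\tau(\psi_{k}(\alpha_{k}(s))) = \xi_{\ell^{k}}^{m}$. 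On the other hand, the function $a \mapsto 1 - \xi_{\ell^{k}}^{a}$ on $\mathbb{Z}$ is constant on residue classes modulo $\ell^{k}$ because $\xi_{\ell^{k}}^{\ell^{k}} = 1$, so its continuous extension to $\mathbb{Z}_{\ell}$, which by construction is $a \mapsto \rho_{\ell^{k}}(a)$, is constant on cosets of $\ell^{k}\mathbb{Z}_{\ell}$; therefore $\rho_{\ell^{k}}(\alpha(s)) = 1 - \xi_{\ell^{k}}^{m}$. Combining gives $\xi_{\ell^{k}}^{m} = 1 - \rho_{\ell^{k}}(\alpha(s))$, as claimed, and the argument for $-\alpha(s)$ is identical (take the representative $-m$).

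Finally I would substitute these evaluations into the formula for $A_{\psi_{k}}$, peel off the constant terms using $\sum_{s \colon {\rm inc}(s) = (v_{i},v_{j})} 1 = B_{ij}$ and $\sum_{s \colon {\rm inc}(s) = (v_{j},v_{i})} 1 = C_{ij}$, and rearrange, obtaining
$$\tau(D - A_{\psi_{k}}) = \left( d_{ij} - B_{ij} - C_{ij} + \sum_{\substack{s \in S \\ {\rm inc}(s) = (v_{i},v_{j})}} \rho_{\ell^{k}}(\alpha(s)) + \sum_{\substack{s \in S \\ {\rm inc}(s) = (v_{j},v_{i})}} \rho_{\ell^{k}}(-\alpha(s)) \right),$$
which is the asserted formula. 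For $k = 0$ one takes $G_{0}$ trivial and $\psi_{0}$ the trivial character; then $A_{\psi_{0}} = (B_{ij} + C_{ij})$ by Lemma~\ref{key_prop}, while $\rho_{1} \equiv 0$, so both sides reduce to $(d_{ij} - B_{ij} - C_{ij})$. I do not anticipate a genuine obstacle: once Corollary~\ref{co} is available the content is purely the bookkeeping of $\tau$, and the only point needing a little care is verifying that the continuously extended value $\rho_{\ell^{k}}(\alpha(s))$ coincides with $1 - \xi_{\ell^{k}}^{m}$ for an integer representative $m$, which is immediate from local constancy modulo $\ell^{k}$.
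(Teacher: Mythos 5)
Your proof is correct and takes essentially the same route as the paper: apply Corollary~\ref{co} to the $k$-th layer, rewrite $\psi_{k}(\alpha_{k}(s)) = 1 - \pi_{\ell^{k}}(m)$ for an integer representative $m$ of $\alpha_{k}(s)$, pull out the constant counts $B_{ij}$ and $C_{ij}$, and apply $\tau$. The only cosmetic difference is that the paper works with the canonical representative $p_{k}(\alpha(s))$ (truncating the $\ell$-adic expansion) rather than an arbitrary integer representative plus the local-constancy observation you supply; both are the same computation.
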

\begin{proof}
For $k \in \mathbb{N}$, let $p_{k}:\mathbb{Z}_{\ell} \longrightarrow \mathbb{Z}$ be the function defined by
$$p_{k}\left(\sum_{i=0}^{\infty}a_{i} \ell^{i} \right) = \sum_{i=0}^{k-1} a_{i} \ell^{i}, \, \, (a_{i} \in \{0,\ldots, \ell - 1\}).$$
By Corollary \ref{co}, the $ij$-th entry of the matrix $D - A_{\psi_{k}}$ is 
\begin{equation*}
d_{ij} - \left( \sum_{\substack{s \in S \\ {\rm inc}(s) = (v_{i},v_{j})}}\psi_{k}(\alpha_{k}(s)) + \sum_{\substack{s \in S \\ {\rm inc}(s) = (v_{j},v_{i})}}\psi_{k}(-\alpha_{k}(s))\right)
\end{equation*}
which is equal to
$$d_{ij} - B_{ij} - C_{ij} + \sum_{\substack{s \in S \\ {\rm inc}(s) = (v_{i},v_{j})}}\pi_{\ell^{k}}(p_{k}(\alpha(s)) + \sum_{\substack{s \in S \\ {\rm inc}(s) = (v_{j},v_{i})}}\pi_{\ell^{k}}(-p_{k}(\alpha(s)).$$
Applying $\tau$ to this last equation gives the desired result.  If $k=0$, we get the Laplacian matrix of $X$ on both sides, so we still have an equality.
\end{proof}

\begin{corollary} \label{poly}
With the same notation as in Proposition \ref{useful}, write 
$$\alpha(S) = \{b_{1},\ldots,b_{t}\}$$ 
for some distinct $b_{y} \in \mathbb{Z}_{\ell}$.  For $i,j \in \{1,\ldots,g \}$ and $y \in \{1,\ldots,t\}$, define the integers
$$\gamma_{ij}^{(y)} = |\{s \in S \, | \, {\rm inc}(s) = (v_{i},v_{j}) \text{ and } \alpha(s) = b_{y} \}|, $$
$$\delta_{ij}^{(y)} = |\{s \in S \, | \, {\rm inc}(s) = (v_{j},v_{i}) \text{ and } \alpha(s) = b_{y} \}|,$$
and the homogenous linear polynomials
$$P_{ij}(X_{1},\ldots,X_{t},Y_{1},\ldots,Y_{t}) = \gamma_{ij}^{(1)}X_{1} + \ldots + \gamma_{ij}^{(t)}X_{t} + \delta_{ij}^{(1)}Y_{1} + \ldots + \delta_{ij}^{(t)}Y_{t}.$$
Furthermore, define the $g \times g$ matrix
$$M = (d_{ij} - B_{ij} - C_{ij} + P_{ij}(X_{1},\ldots,X_{t},Y_{1},\ldots,Y_{t})),$$
and the polynomial with integer coefficients
$$ P(X_{1},\ldots,X_{t},Y_{1},\ldots,Y_{t}) = {\rm det}(M).$$
Then
\begin{enumerate}
\item The polynomial $P$ has no constant term,
\item $\tau(h_{X}(1,\psi_{k})) = P(\rho_{\ell^{k}}(b_{1}),\ldots, \rho_{\ell^{k}}(b_{t}),\rho_{\ell^{k}}(-b_{1}),\ldots,\rho_{\ell^{k}}(-b_{t}))$ for all $k \ge 1$.
\end{enumerate}
\end{corollary}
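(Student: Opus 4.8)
The plan is to deduce both claims directly from Proposition~\ref{useful}, viewing $h_X(1,\psi_k) = \det(D - A_{\psi_k})$ through the polynomial $P$. First I would explain why the matrix $M$ of Corollary~\ref{poly} is the correct ``universal'' object: by definition $\alpha(s) \in \{b_1,\ldots,b_t\}$ for every $s \in S$, so grouping the sums in Proposition~\ref{useful} according to the value of $\alpha(s)$ gives
$$
\sum_{\substack{s \in S \\ {\rm inc}(s) = (v_i,v_j)}} \rho_{\ell^k}(\alpha(s)) = \sum_{y=1}^{t} \gamma_{ij}^{(y)} \rho_{\ell^k}(b_y),
\qquad
\sum_{\substack{s \in S \\ {\rm inc}(s) = (v_j,v_i)}} \rho_{\ell^k}(-\alpha(s)) = \sum_{y=1}^{t} \delta_{ij}^{(y)} \rho_{\ell^k}(-b_y),
$$
since the condition ${\rm inc}(s) = (v_j,v_i)$ combined with $\alpha(s) = b_y$ is exactly what $\delta_{ij}^{(y)}$ counts, and $\rho_{\ell^k}(-\alpha(s)) = \rho_{\ell^k}(-b_y)$ on that set. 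Hence the $ij$-th entry of $\tau(D - A_{\psi_k})$ equals $d_{ij} - B_{ij} - C_{ij} + P_{ij}(\rho_{\ell^k}(b_1),\ldots,\rho_{\ell^k}(b_t),\rho_{\ell^k}(-b_1),\ldots,\rho_{\ell^k}(-b_t))$, i.e.\ $\tau(D-A_{\psi_k})$ is obtained from $M$ by the substitution $X_y \mapsto \rho_{\ell^k}(b_y)$, $Y_y \mapsto \rho_{\ell^k}(-b_y)$. Taking determinants and using that $\tau$ is a ring homomorphism (so it commutes with $\det$) yields (2) for all $k \ge 1$.

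For (1), I would evaluate $P$ at the origin $(X_1,\ldots,X_t,Y_1,\ldots,Y_t) = (0,\ldots,0)$. At that point each $P_{ij}$ vanishes, so $M$ specializes to the integer matrix $(d_{ij} - B_{ij} - C_{ij})$. The cleanest way to see this is the Laplacian. Indeed $B_{ij} + C_{ij}$ counts all undirected edges of $X$ between $v_i$ and $v_j$ (with the convention built into $S$), so $(d_{ij} - B_{ij} - C_{ij})$ is exactly the Laplacian matrix of $X$ — precisely the $k = 0$ case noted at the end of the proof of Proposition~\ref{useful}, where $\rho_{\ell^0}(a) = 1 - \xi_1^a = 0$. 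Since $X$ is connected, its Laplacian is singular (the all-ones vector is in the kernel), so $\det$ of that matrix is $0$; equivalently, one can invoke formula~\eqref{mot} together with the matrix-tree theorem, but the kernel argument is more elementary. Therefore $P(0,\ldots,0) = 0$, which is the assertion that $P$ has no constant term.

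The only point requiring a little care is bookkeeping: making sure the decomposition of the edge-sums by the value of $\alpha(s)$ is exhaustive and non-overlapping (which holds because the $b_y$ are distinct and $\alpha(S) = \{b_1,\ldots,b_t\}$), and checking the sign/orientation conventions so that $\rho_{\ell^k}(-\alpha(s)) = \rho_{\ell^k}(-b_y)$ lines up with the definition of $\delta_{ij}^{(y)}$ rather than $\gamma_{ij}^{(y)}$. I expect this combinatorial matching — not any analytic input — to be the main (and rather mild) obstacle; once the substitution identity $\tau(D - A_{\psi_k}) = M|_{X_y = \rho_{\ell^k}(b_y),\,Y_y = \rho_{\ell^k}(-b_y)}$ is in hand, both (1) and (2) are immediate from taking determinants and specializing at $k=0$ respectively.
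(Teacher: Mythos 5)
Your proposal is correct and takes essentially the same approach as the paper: Proposition~\ref{useful} yields $\tau(D-A_{\psi_k}) = M\vert_{X_y=\rho_{\ell^k}(b_y),\,Y_y=\rho_{\ell^k}(-b_y)}$ (you just make the grouping of the edge-sums by the values $b_y$ explicit, which the paper leaves implicit), then take determinants and invoke equation~\eqref{mot} for part (2), and specialize to the origin to get the singular Laplacian for part (1). The only minor imprecision is the parenthetical suggestion that formula~\eqref{mot} with the matrix-tree theorem could replace the kernel argument for singularity of the Laplacian; this is not quite parallel (\eqref{mot} concerns $h_X(1,\psi)$, not the determinant of the trivial-character Laplacian), but since you opt for the kernel argument anyway, the proof stands.
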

\begin{proof}
Indeed, by Proposition \ref{useful}, we have
$${\rm det}(\tau(D - A_{\psi_{k}})) = P(\rho_{\ell^{k}}(b_{1}),\ldots, \rho_{\ell^{k}}(b_{t}),\rho_{\ell^{k}}(-b_{1}),\ldots,\rho_{\ell^{k}}(-b_{t})),$$
and thus the second claim follows from (\ref{mot}).

For the first claim, note that
$$P(0,\ldots,0) = {\rm det}(d_{ij} - B_{ij} - C_{ij}), $$
but the matrix $(d_{ij} - B_{ij} - C_{ij})$ is the Laplacian matrix of $X$ which is singular.
\end{proof}
\begin{remark}
If $X$ is a bouquet, then the polynomial $P$ of Corollary \ref{poly} has the simple form
$$P(X_{1},\ldots,X_{t},Y_{1},\ldots,Y_{t}) =  C_{1}(X_{1}+Y_{1}) + \ldots + C_{t}(X_{t} + Y_{t}),$$
for some integers $C_{i}$ ($i=1,\ldots,t$).
\end{remark}

%\section{Abelian $\ell$-towers of multigraphs}
\section{Abelian $\ell$-towers of multigraphs} \label{main_th}

We can now state our main result which can be viewed as a graph theoretical analogue of a theorem of Iwasawa.  (See Theorem $11$ in \cite{Iwasawa:1959} and \S $4.2$ of \cite{Iwasawa:1973}.)
\begin{theorem}\label{maintheorem}
Let $X$ be a connected multigraph satisfying $\chi(X) \neq 0$ and let $\alpha:S \longrightarrow \mathbb{Z}_{\ell}$ be a function for which all multigraphs $X(\mathbb{Z}/\ell^{n}\mathbb{Z},S,\alpha_{n})$ are connected.  Write
$$\alpha(S) = \{b_{1},\ldots,b_{t} \},$$
where the $b_{y}$ are distinct $\ell$-adic integers.  Consider the abelian $\ell$-tower
$$X  \longleftarrow X(\mathbb{Z}/\ell \mathbb{Z},S,\alpha_{1}) \longleftarrow X(\mathbb{Z}/\ell^{2}\mathbb{Z},S,\alpha_{2}) \longleftarrow \ldots \longleftarrow X(\mathbb{Z}/\ell^{n}\mathbb{Z},S,\alpha_{n}) \longleftarrow \ldots$$
and define the $\ell$-adic integers $c_{j}$ via 
\begin{equation*}
\begin{aligned}
Q(T) &= P(Q_{b_{1}}(T),\ldots,Q_{b_{t}}(T),Q_{-b_{1}}(T),\ldots,Q_{-b_{t}}(T)) \\
&=c_{1}T + c_{2}T^{2} + \ldots \in \mathbb{Z}_{\ell}\llbracket T \rrbracket,
\end{aligned}
\end{equation*}
where $P(X_{1},\ldots,X_{t},Y_{1},\ldots,Y_{t}) \in \mathbb{Z}[X_{1},\ldots,X_{t},Y_{1},\ldots,Y_{t}]$ is the polynomial from Corollary \ref{poly}.
Let
$$\mu = {\rm min}\{v_{\ell}(c_{j})\, | \, j =1,2,\ldots\}, $$
and 
$$\lambda = {\rm min}\{j \, | \, j \in \mathbb{N} \text{ and } v_{\ell}(c_{j}) = \mu\}  -1. $$
If $\kappa_{n}$ denotes the number of spanning trees of $X(\mathbb{Z}/\ell^{n}\mathbb{Z},S,\alpha_{n})$, then there exist a nonnegative integer $n_{0}$ and a constant $\nu \in \mathbb{Z}$ (depending also on the $b_{y}$) such that
$${\rm ord}_{\ell}(\kappa_{n}) = \mu \ell^{n} + \lambda n + \nu,$$
when $n \ge n_{0}$.  
\end{theorem}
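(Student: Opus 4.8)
The plan is to reduce the statement to an application of the Weierstrass preparation theorem in $\mathbb{Z}_\ell\llbracket T\rrbracket$, exactly as in the prequel \cite{McGown/Vallieres:2021}, once the arithmetic has been repackaged through the power series $Q(T)$ built in \S\ref{power_ser}. First I would combine Corollary \ref{poly} with Proposition \ref{hip}: for every $k \ge 1$, Proposition \ref{hip} gives $Q_{b_y}(\rho_{\ell^k}) = \rho_{\ell^k}(b_y)$ and $Q_{-b_y}(\rho_{\ell^k}) = \rho_{\ell^k}(-b_y)$, so substituting $T = \rho_{\ell^k}$ into the power series
$$Q(T) = P(Q_{b_1}(T),\ldots,Q_{b_t}(T),Q_{-b_1}(T),\ldots,Q_{-b_t}(T))$$
yields $Q(\rho_{\ell^k}) = \tau(h_X(1,\psi_k))$. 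Here one checks $Q(T) \in \mathbb{Z}_\ell\llbracket T\rrbracket$ has no constant term (it is a polynomial with integer coefficients in power series each lying in $T\cdot\mathbb{Z}_\ell\llbracket T\rrbracket$, and $P$ has no constant term by Corollary \ref{poly}(1)), so $\mu$ and $\lambda$ are well defined — but one must also rule out $Q(T) \equiv 0$; this follows because $Q(\rho_{\ell^k}) = \tau(h_X(1,\psi_k))$ and $h_X(1,\psi_k) \ne 0$ whenever $X_k/X$ is connected (the relevant $L$-value is nonzero for a connected cover), so not all $c_j$ vanish and $\mu < \infty$.

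Next I would translate the divisibility in \eqref{key_formula}, namely
$${\rm ord}_\ell(\kappa_n) = {\rm ord}_\ell(\kappa_X) - n + \sum_{i=1}^n {\rm ord}_{\mathcal{L}_i}(h_X(1,\psi_i)),$$
into $v_\ell$ via the normalization \eqref{relationship_val}: since ${\rm ord}_{\mathcal{L}_i}(h_X(1,\psi_i)) = \varphi(\ell^i)\, v_\ell(\tau(h_X(1,\psi_i))) = \varphi(\ell^i)\, v_\ell(Q(\rho_{\ell^i}))$, the whole problem comes down to understanding $v_\ell\bigl(Q(\rho_{\ell^i})\bigr)$ as $i \to \infty$. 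By Weierstrass preparation, write $Q(T) = \ell^\mu\, U(T)\, F(T)$ where $U(T)$ is a unit in $\mathbb{Z}_\ell\llbracket T\rrbracket$ and $F(T)$ is a distinguished polynomial of degree $\lambda$ (this is where $\mu$ and $\lambda$ as defined in the statement match the Weierstrass invariants: $\mu$ is the $T$-adic content exponent and $\lambda$ the degree of the distinguished factor). Since $|\rho_{\ell^i}|_\ell < 1$ by \eqref{val}, the unit contributes nothing: $v_\ell(U(\rho_{\ell^i})) = 0$ for all large $i$ (indeed for all $i$, as $U$ has unit constant term and $\rho_{\ell^i}$ lies in the maximal ideal). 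Thus $v_\ell(Q(\rho_{\ell^i})) = \mu + v_\ell(F(\rho_{\ell^i}))$.

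The remaining computation of $v_\ell(F(\rho_{\ell^i}))$ for a distinguished polynomial $F$ of degree $\lambda$ is the technical heart, and is precisely the local analysis carried out in \S$4$ of \cite{McGown/Vallieres:2021}: writing $F(T) = \prod_{r=1}^\lambda (T - \alpha_r)$ with each $\alpha_r$ in the maximal ideal of $\overline{\mathbb{Z}}_\ell$, one has $v_\ell(\rho_{\ell^i} - \alpha_r) \to v_\ell(\rho_{\ell^i}) = 1/\varphi(\ell^i) \to 0$, and for $i$ large enough (past the point where $v_\ell(\rho_{\ell^i})$ drops below $\min_r v_\ell(\alpha_r)$, among the finitely many nonzero roots) one gets $v_\ell(\rho_{\ell^i} - \alpha_r) = v_\ell(\rho_{\ell^i}) = 1/\varphi(\ell^i)$ for each $r$, hence $v_\ell(F(\rho_{\ell^i})) = \lambda/\varphi(\ell^i)$. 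Therefore, for $i \ge n_0$,
$${\rm ord}_{\mathcal{L}_i}(h_X(1,\psi_i)) = \varphi(\ell^i)\Bigl(\mu + \frac{\lambda}{\varphi(\ell^i)}\Bigr) = \mu\,\varphi(\ell^i) + \lambda.$$
Summing a geometric-type series: $\sum_{i=1}^n \mu\,\varphi(\ell^i)$ contributes $\mu(\ell^n - 1) + (\text{constant})$ and $\sum \lambda$ over the stable range contributes $\lambda n + (\text{constant})$, while the finitely many exceptional terms $i < n_0$ and the $-n$ and ${\rm ord}_\ell(\kappa_X)$ in \eqref{key_formula} are absorbed into a single integer constant $\nu$. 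This gives ${\rm ord}_\ell(\kappa_n) = \mu\ell^n + \lambda n + \nu$ for $n \ge n_0$, as claimed.

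**The main obstacle** I expect is the bookkeeping in the last paragraph: verifying that $F(\rho_{\ell^i})$ has the stabilized valuation requires controlling the roots $\alpha_r$ of the distinguished polynomial in $\overline{\mathbb{Q}}_\ell$ and arguing that $v_\ell(\rho_{\ell^i})$ eventually lies strictly below the valuation of every nonzero root — one must handle the roots with $v_\ell(\alpha_r)=0$ (impossible, since $F$ is distinguished so all roots lie in the maximal ideal, a point worth stating explicitly) and the root $\alpha_r = 0$ if it occurs (which forces a shift but is harmless). Everything else is a faithful transcription of \cite{McGown/Vallieres:2021} with the Chebyshev-based $P_a(T)$ replaced by the integer-valued-polynomial-based $Q_a(T)$, the key new input being Proposition \ref{hip} which makes the substitution $T = \rho_{\ell^k}$ legitimate for the new power series.
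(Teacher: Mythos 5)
Your overall route — combine Proposition~\ref{hip} with Corollary~\ref{poly} to get $Q(\rho_{\ell^i}) = \tau(h_X(1,\psi_i))$, apply Weierstrass preparation to $Q$, compute $v_\ell$ of the distinguished factor at $\rho_{\ell^i}$, and then sum using \eqref{key_formula} — is exactly the paper's route (the paper simply cites the analogous local analysis from Theorem~4.1 of \cite{McGown/Vallieres:2021} rather than spelling it out as you do). However, there is a genuine off-by-one error in the identification of the Weierstrass invariants, together with a compensating arithmetic slip that hides it.

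The theorem defines $\lambda = \min\{j \in \mathbb{N} : v_\ell(c_j) = \mu\} - 1$, and recall $Q(T) = c_1 T + c_2 T^2 + \cdots$ has \emph{no constant term}. If $j_0$ is the smallest index with $v_\ell(c_{j_0}) = \mu$, then the distinguished polynomial $F$ in the Weierstrass decomposition $\ell^{-\mu}Q = U \cdot F$ has degree $j_0 = \lambda + 1$, not $\lambda$: the paper's $\lambda$ is one \emph{less} than the degree of $F$. Consequently the correct stabilized valuation is
\begin{equation*}
v_\ell\bigl(Q(\rho_{\ell^i})\bigr) = \mu + \frac{\lambda+1}{\varphi(\ell^i)}, \qquad\text{i.e.}\qquad {\rm ord}_{\mathcal{L}_i}\bigl(h_X(1,\psi_i)\bigr) = \mu\,\varphi(\ell^i) + \lambda + 1,
\end{equation*}
not $\mu\,\varphi(\ell^i) + \lambda$ as you wrote. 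With your version the sum would produce $\mu\ell^n + \lambda n + \nu'$ only if the $-n$ in \eqref{key_formula} could be swept into the constant $\nu$, which you assert but which is false — $-n$ is linear in $n$, not constant. In fact, carrying through your degree-$\lambda$ claim honestly gives $\mu\ell^n + (\lambda-1)n + \nu$, the wrong answer. The correct bookkeeping is: the $+1$ in the degree $\lambda+1$ of the distinguished factor is precisely what cancels the $-n$ coming from \eqref{key_formula}, yielding $\lambda n$. So you need to (a) correct the degree of $F$ to $\lambda+1$, noting it comes from $Q$ having vanishing constant coefficient, and (b) not absorb $-n$ into $\nu$ but instead let the summation $\sum_{i=n_0}^{n}(\lambda+1)$ combine with $-n$ to give $\lambda n + \text{const}$. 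The rest of your argument (the root analysis of $F$, the unit factor $U$, the nonvanishing of $Q$) is sound and matches the mechanism the paper imports from \cite{McGown/Vallieres:2021}.
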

\begin{proof}
Our starting point is the formula (\ref{key_formula}).  Combining with (\ref{relationship_val}) and Corollary \ref{poly} give
\begin{equation*}
\begin{aligned}
{\rm ord}_{\ell}(\kappa_{n}) &= {\rm ord}_{\ell}(\kappa_{X}) -n + \sum_{i=1}^{n}\varphi(\ell^{i})v_{\ell}\left(\tau(h_{X}(1,\psi_{i})) \right)  \\
&= {\rm ord}_{\ell}(\kappa_{X}) -n + \sum_{i=1}^{n}\varphi(\ell^{i})v_{\ell}\left(P(\rho_{\ell^{i}}(b_{1}),\ldots, \rho_{\ell^{i}}(b_{t}),\rho_{\ell^{i}}(-b_{1}),\ldots,\rho_{\ell^{i}}(-b_{t})) \right).
\end{aligned}
\end{equation*}
By Proposition \ref{hip}, we have
\begin{equation*}
\begin{aligned}
P(\rho_{\ell^{i}}(b_{1}),\ldots, \rho_{\ell^{i}}(b_{t}),\rho_{\ell^{i}}(-b_{1}),\ldots,\rho_{\ell^{i}}(-b_{t})) &= P(Q_{b_{1}}(\rho_{\ell^{i}}),\ldots,Q_{b_{t}}(\rho_{\ell^{i}}),Q_{-b_{1}}(\rho_{\ell^{i}}),\ldots, Q_{-b_{t}}(\rho_{\ell^{i}})) \\
&= Q(\rho_{\ell^{i}}).
\end{aligned}
\end{equation*}
The exact same argument as in the proof of Theorem $4.1$ of \cite{McGown/Vallieres:2021} shows that for $i$ large, we have
$$v_{\ell}(Q(\rho_{\ell^{i}})) = \mu + \frac{\lambda + 1}{\varphi(\ell^{i})}. $$
Therefore, there exists $n_{0} \ge 0$ and an integer $C$ such that if $n \ge n_{0}$, then
\begin{equation*}
\begin{aligned}
{\rm ord}_{\ell}(\kappa_{n}) &= {\rm ord}_{\ell}(\kappa_{X}) -n + C +  \sum_{i = n_{0}}^{n}\varphi(\ell^{i})v_{\ell}(Q(\rho_{\ell^{i}})) \\
&=  {\rm ord}_{\ell}(\kappa_{X})-n + C + \sum_{i = n_{0}}^{n}(\mu \cdot \varphi(\ell^{i}) + (\lambda+1))\\
&=  {\rm ord}_{\ell}(\kappa_{X})-n + C + (\lambda+1)(n-(n_{0}-1)) + \mu(\ell^{n} - \ell^{n_{0} - 1}),
\end{aligned}
\end{equation*}
and this ends the proof.
\end{proof}

%\section{Examples}
\section{Examples} \label{examples}
The computations of the number of spanning trees in this section have been performed with the software \cite{SAGE}.  The computations of the approximated power series $Q_{a}(T)$ and $Q(T)$ have been performed with the software \cite{PARI}.  Examples are numerous and we present only a few.

\begin{enumerate}
\item Let us revisit example ($1$) of \S $5.6$ of \cite{Vallieres:2021}.  In this case, $\ell =2$, and we set $b_{1} = 1$.  The $1 \times 1$ matrix $M$ is given by
\begin{equation*}
\begin{pmatrix}
2X_{1} + 2Y_{1}
\end{pmatrix}.
\end{equation*}
We have $Q_{1}(T) = T$, and using Remark \ref{difptview}, we have
$$Q_{-1}(T) = -T - T^{2} - T^{3} - \ldots, $$
since $e_{k}(-1) = -1$ for all $k \ge 1$.  Thus, the power series $Q$ is
$$Q(T) = -2T^{2} - 2T^{3} -2T^{4}- \ldots \in \mathbb{Z}\llbracket T \rrbracket \subseteq  \mathbb{Z}_{2}\llbracket T \rrbracket,$$
so we should have $\mu = 1$ and $\lambda = 1$ which was indeed the case.  We point out in passing that when working with the power series $Q_{a}(T)$ of the current paper rather than the power series $P_{a}(T)$ of \cite{McGown/Vallieres:2021}, the power series $Q(T)$ is not a polynomial anymore.  The reason is because the power series $P_{a}(T)$ satisfy $P_{a}(T) = P_{-a}(T)$ and $P_{a}(T)$ is a polynomial when $a \in \mathbb{N}$, but the $Q_{a}(T)$ of the current paper do not satisfy this property.

\item Let us revisit example ($3$) of \S $5.6$ of \cite{Vallieres:2021}.  In this case, $\ell =3$, and we let $b_{1} = 1$, $b_{2} = 4$, and $b_{3}=20$.  The $1 \times 1$ matrix $M$ is given by
\begin{equation*}
\begin{pmatrix}
X_{1}+X_{2}+X_{3} +Y_{1}+Y_{2}+Y_{3}
\end{pmatrix}.
\end{equation*}
The power series $Q$ is given by
$$-417T^{2} -417 T^{3} -13737 T^{4} -27057T^{5} -215945T^{6}+ \ldots \in \mathbb{Z}\llbracket T \rrbracket \subseteq  \mathbb{Z}_{3}\llbracket T \rrbracket$$
The factorizations of the coefficients are
$$417 = 3 \cdot 139, \, 13737 = 3 \cdot 19 \cdot 241, \, 27057 = 3 \cdot 29 \cdot 311, \, 215945 = 5 \cdot 43189, $$
so we should have $\mu=0$ and $\lambda =5$ which was indeed the case.

\item Let us revisit example ($1$) of \S $5$ of \cite{McGown/Vallieres:2021}.  In this case, $\ell =2$, and we let $b_{1} = 1/3$ and $b_{2} = 3/5$.  The $1 \times 1$ matrix $M$ is given by
\begin{equation*}
\begin{pmatrix}
X_{1}+X_{2} +Y_{1}+Y_{2}
\end{pmatrix}.
\end{equation*}
The power series $Q(T) \in \mathbb{Z}_{2}\llbracket T \rrbracket$ is given by
$$(0.1101\ldots) T^{2} + (0.1101\ldots) T^{3} + (0.0011\ldots) T^{4} + (0.1011\ldots)T^{5} + (1.1100\ldots)T^{6}+ \ldots,$$
so we should have $\mu=0$ and $\lambda =5$ which was indeed the case.

\item Let us revisit (a) of example $7$ in \S $5.1$ of \cite{Gonet:2021}.  Let $X = K_{10}$ be the complete graph on ten vertices and let $\ell = 5$.  Label the vertices $V_{X} = \{ v_{1},\ldots,v_{10}\}$ and pick a section $\omega$ so that $s_{1}$ is the unique edge going from $v_{1}$ to $v_{2}$ and all edges are directed using the lexicographic order on $V_{X}$.  Consider the function $\alpha:S \longrightarrow \mathbb{Z}_{5}$ given by $\alpha(s_{1}) = 1$ and $\alpha(s_{i}) = 0$ if $i \neq 1$.  We let $b_{1} = 0$ and $b_{2}=1$.  The $10 \times 10$ matrix $M$ is given by
{\scriptsize
\begin{equation*}
\begin{pmatrix}
9 &-1 + X_{2} & -1+X_{1} & -1+X_{1} & -1+X_{1} & -1+X_{1} & -1+X_{1} & -1+X_{1} & -1+X_{1} & -1+X_{1}\\
-1+Y_{2} &9 & -1+X_{1} & -1+X_{1} & -1+X_{1} & -1+X_{1} & -1+X_{1} & -1+X_{1} & -1+X_{1} & -1+X_{1}\\
-1+Y_{1} & -1+Y_{1} & 9 & -1+X_{1} & -1+X_{1} & -1+X_{1} & -1+X_{1} & -1+X_{1} & -1+X_{1} & -1+X_{1}\\
-1+Y_{1} & -1+Y_{1} & -1+Y_{1} & 9 &-1+ X_{1} & -1+X_{1} & -1+X_{1} & -1+X_{1} &-1+ X_{1} & -1+X_{1} \\
-1+Y_{1} & -1+Y_{1} & -1+Y_{1} & -1+Y_{1} & 9 & -1+X_{1} & -1+X_{1} & -1+X_{1} & -1+X_{1} & -1+X_{1} \\
-1+Y_{1} & -1+Y_{1} & -1+Y_{1} & -1+Y_{1} & -1+Y_{1} & 9 &-1+ X_{1} & -1+X_{1} & -1+X_{1} & -1+X_{1} \\
-1+Y_{1} & -1+Y_{1} & -1+Y_{1} & -1+Y_{1} & -1+Y_{1} & -1+Y_{1} & 9 & -1+X_{1} & -1+X_{1} & -1+X_{1} \\
-1+Y_{1} & -1+Y_{1} & -1+Y_{1} & -1+Y_{1} & -1+Y_{1} & -1+Y_{1} & -1+Y_{1} & 9 &-1+ X_{1} & -1+X_{1} \\
-1+Y_{1} & -1+Y_{1} & -1+Y_{1} & -1+Y_{1} & -1+Y_{1} & -1+Y_{1} & -1+Y_{1} & -1+Y_{1} & 9 & -1+X_{1} \\
-1+Y_{1} & -1+Y_{1} & -1+Y_{1} & -1+Y_{1} & -1+Y_{1} & -1+Y_{1} & -1+Y_{1} & -1+Y_{1} & -1+Y_{1} & 9 \\
\end{pmatrix}
\end{equation*}}
\begin{flushleft}
The power series $Q$ starts as follows
$$Q(T) = -2^{10}\cdot 5^{7} \cdot \left(T^{2} + T^{3} + T^{4} + \ldots \right) \in \mathbb{Z}\llbracket T \rrbracket \subseteq \mathbb{Z}_{5}\llbracket T \rrbracket, $$
so we should have $\mu = 7$ and $\lambda=1$ which is what was obtained in \cite{Gonet:2021} as well.
\end{flushleft}

\item Let $\ell = 5$ and let $X$ be the dumbbell multigraph.  Let $\alpha:S \longrightarrow \mathbb{Z} \subseteq \mathbb{Z}_{5}$ be as in the example of \S \ref{voltage}, where we label the vertices of $X$ from left to right, so the left vertex is $v_{1}$ and the right vertex is $v_{2}$.  We let $b_{1}=1$, $b_{2}=0$, and $b_{3}=2$.  Then, we get:
\begin{equation*}
X \longleftarrow G(5,2) \longleftarrow G(5^2,2) \longleftarrow G(5^3,2) \longleftarrow \ldots,
\end{equation*}
where $G(n,k)$ denotes the generalized Petersen graph.  The $2 \times 2$ matrix $M$ is given by
\begin{equation*}
\begin{pmatrix}
1 + X_{1} + Y_{1} & -1 + X_{2}\\
-1 + Y_{2}& 1 + X_{3} + Y_{3}
\end{pmatrix}.
\end{equation*}
The power series $Q$ starts as follows 
$$Q(T) = -5T^{2} - 5 T^{3} -2 T^{4} + \ldots \in \mathbb{Z}\llbracket T \rrbracket \subseteq \mathbb{Z}_{5}\llbracket T \rrbracket,$$
so we should have $\mu = 0$ and $\lambda = 3$.  We calculate
$$\kappa_{0} = 1, \kappa_{1} = 2^{4}\cdot 5^{3}, \kappa_{2} = 2^{4} \cdot 5^{6} \cdot 480449^{2},$$
$$\kappa_{3} = 2^{4} \cdot 5^{9} \cdot 187751^{2} \cdot 480449^{2} \cdot 1829501^{2} \cdot 4731751^{2} \cdot 16390103451749^{2}, \ldots$$
We have
$${\rm ord}_{5}(\kappa_{n}) =  3n,$$
for all $n \ge 0$.
\item Let $\ell = 5$ and let $X$ be the dumbbell multigraph from \S \ref{voltage}.  We take the same section, and we label the vertices of $X$ from left to right as in the previous example.  We change $\alpha$ to be $\alpha:S \longrightarrow \mathbb{Z}_{5}$, where 
$$\alpha(s_{1}) = 2.11111\ldots \in \mathbb{Z}_{5}, $$
and $\alpha(s_{2}) = 0$, $\alpha(s_{3}) = 2$.  We let $b_{1} = 2.11111\ldots$, $b_{2} = 0$ and $b_{3}=2$.  Note that we have
$$p_{1}(b_{1}) = 2, p_{2}(b_{1}) = 7, p_{3}(b_{1}) = 32, \ldots $$
Then, we get an abelian $5$-tower
\begin{equation*}
X \longleftarrow I(5,2,2) \longleftarrow I(5^2,7,2) \longleftarrow I(5^3,32,2) \longleftarrow \ldots,
\end{equation*}
where $I(n,j,k)$ denotes the $I$-graph introduced in \cite{Foster:1988}.  For instance, $I(5,2,2)$ is
\begin{center}
\begin{tikzpicture}[baseline={([yshift=-0.6ex] current bounding box.center)}]
% create the node
\node[draw=none,minimum size=2cm,regular polygon,regular polygon sides=5] (a) {};
\node[draw=none, minimum size=1.1cm,regular polygon,regular polygon sides=5] (b) {};

% draw a black dot in each vertex
\foreach \x in {1,2,...,5}
  \fill (a.corner \x) circle[radius=0.7pt];
  
\foreach \y in {1,2,...,5}
  \fill (b.corner \y) circle[radius=0.7pt];
  
\path (a.corner 1) edge (a.corner 3);
\path (a.corner 2) edge (a.corner 4);
\path (a.corner 3) edge (a.corner 5);
\path (a.corner 4) edge (a.corner 1);
\path (a.corner 5) edge (a.corner 2);

\path (a.corner 1) edge (b.corner 1);
\path (a.corner 2) edge (b.corner 2);
\path (a.corner 3) edge (b.corner 3);
\path (a.corner 4) edge (b.corner 4);
\path (a.corner 5) edge (b.corner 5);

\path (b.corner 1) edge (b.corner 3);
\path (b.corner 1) edge (b.corner 4);
\path (b.corner 2) edge (b.corner 4);
\path (b.corner 2) edge (b.corner 5);
\path (b.corner 3) edge (b.corner 5);

\end{tikzpicture}
\end{center}
The $2 \times 2$ matrix $M$ is as in the previous example
\begin{equation*}
\begin{pmatrix}
1 + X_{1} + Y_{1} & -1 + X_{2}\\
-1 + Y_{2}& 1 + X_{3} + Y_{3}
\end{pmatrix}.
\end{equation*}
The power series $Q$ starts as follows
$$Q(T) = (2.4321\ldots)T^{2} + \ldots \in \mathbb{Z}_{5}\llbracket T \rrbracket, $$
so we should have $\mu = 0$ and $\lambda = 1$.  We calculate
$$\kappa_{0} = 1, \kappa_{1} = 5 \cdot 19^{2}, \kappa_{2} = 5^{2} \cdot 19^{2} \cdot 5340449^{2},$$
$$\kappa_{3} = 5^{3} \cdot 19^{2} \cdot 251^{2} \cdot 272999^{2} \cdot 5340449^{2} \cdot 60106497251^{2} \cdot 69266125271978251^{2}, \ldots$$
We have
$${\rm ord}_{5}(\kappa_{n}) =  n,$$
for all $n \ge 0$.

\item We modify the last example slightly.  Add one more loop at $v_{2}$ and denote it by $s_{4}$.  In this case, the base multigraph is not regular anymore.  Take $\alpha:S \longrightarrow \mathbb{Z}_{5}$ to be defined via
$$\alpha(s_{1}) = 2.1111111\ldots, \alpha(s_{2}) = 0, \alpha(s_{3}) = 2, \alpha(s_{4})  =1,$$  
and let $b_{1} = 2.11111\ldots$, $b_{2} = 0$, $b_{3}=2$, and $b_{4}=1$.  We get an abelian $5$-tower above the modified dumbbell multigraph.  The multigraph at the first layer is
\begin{center}
\begin{tikzpicture}[baseline={([yshift=-0.6ex] current bounding box.center)}]
% create the node
\node[draw=none,minimum size=2cm,regular polygon,regular polygon sides=5] (a) {};
\node[draw=none, minimum size=1.1cm,regular polygon,regular polygon sides=5] (b) {};

% draw a black dot in each vertex
\foreach \x in {1,2,...,5}
  \fill (a.corner \x) circle[radius=0.7pt];
  
\foreach \y in {1,2,...,5}
  \fill (b.corner \y) circle[radius=0.7pt];
  
\path (a.corner 1) edge (a.corner 3);
\path (a.corner 2) edge (a.corner 4);
\path (a.corner 3) edge (a.corner 5);
\path (a.corner 4) edge (a.corner 1);
\path (a.corner 5) edge (a.corner 2);

\path (a.corner 1) edge (b.corner 1);
\path (a.corner 2) edge (b.corner 2);
\path (a.corner 3) edge (b.corner 3);
\path (a.corner 4) edge (b.corner 4);
\path (a.corner 5) edge (b.corner 5);

\path (b.corner 1) edge (b.corner 3);
\path (b.corner 1) edge (b.corner 4);
\path (b.corner 2) edge (b.corner 4);
\path (b.corner 2) edge (b.corner 5);
\path (b.corner 3) edge (b.corner 5);

\path (b.corner 1) edge (b.corner 2);
\path (b.corner 2) edge (b.corner 3);
\path (b.corner 3) edge (b.corner 4);
\path (b.corner 4) edge (b.corner 5);
\path (b.corner 1) edge (b.corner 5);

\end{tikzpicture}
\end{center}
The $2 \times 2$ matrix $M$ is 
\begin{equation*}
\begin{pmatrix}
1 + X_{1} + Y_{1} & -1 + X_{2}\\
-1 + Y_{2}& 1 + X_{3} + Y_{3} + X_{4} + Y_{4}
\end{pmatrix}.
\end{equation*}
The power series $Q$ starts as follows
$$Q(T) = (1.4321\ldots)T^{2} + \ldots \in \mathbb{Z}_{5}\llbracket T \rrbracket, $$
so we should have $\mu = 0$ and $\lambda = 1$.  We calculate
$$\kappa_{0} = 1, \kappa_{1} = 5 \cdot 71^{2}, \kappa_{2} = 5^{2} \cdot 71^{2} \cdot 101^{2} \cdot 1399^{2} \cdot 20749^{2},$$
and $\kappa_{3}$ is
{\footnotesize $$5^{3} \cdot   71^{2} \cdot 101^{2} \cdot 1399^{2} \cdot 9749^{2} \cdot 20749^{2} \cdot 2144501^{2} \cdot 5100751^{2} \cdot 79219966014816908153269755910249^{2}$$} 
\begin{flushleft}
We have
$${\rm ord}_{5}(\kappa_{n}) =  n,$$
for all $n \ge 0$.
\end{flushleft}

\item Let $\ell = 3$, and consider now the following multigraph $X$
\begin{center}
\begin{tikzpicture}
\fill (0,0) circle[radius=0.7pt];
\fill (1,0) circle[radius=0.7pt];
\fill (2,0) circle[radius=0.7pt];
\fill (1,1.73) circle[radius=0.7pt];
\draw (0,0) .. controls (0.25,0.25) and (0.75,0.25)  .. (1,0);
\draw (0,0) .. controls (0.25,-0.25) and (0.75,-0.25)  .. (1,0);
\draw (1,0) .. controls (1.25,0.25) and (1.75,0.25)  .. (2,0);
\draw (1,0) .. controls (1.25,-0.25) and (1.75,-0.25)  .. (2,0);
\draw (2,0) -- (1,1.73) -- (0,0);
\end{tikzpicture}
\end{center}
\begin{flushleft}
and the section:
\end{flushleft}
\begin{center}
\begin{tikzpicture}
\fill (0,0) circle[radius=0.7pt];
\fill (1,0) circle[radius=0.7pt];
\fill (2,0) circle[radius=0.7pt];
\fill (1,1.73) circle[radius=0.7pt];

\begin{scope}[decoration={
    markings,
    mark=at position 0.5 with {\arrow{>}}}
    ] 

\draw[postaction={decorate}](0,0) node[below]{$v_1$}.. controls (0.25,0.25) and (0.75,0.25)  .. node[above] {$s_1$} (1,0) node[below]{$v_2$};
\draw[postaction={decorate}] (0,0) .. controls (0.25,-0.25) and (0.75,-0.25)  .. node[below] {$s_2$} (1,0);
\draw[postaction={decorate}] (1,0) .. controls (1.25,0.25) and (1.75,0.25)  .. node[above] {$s_3$} (2,0) node[below]{$v_3$};
\draw[postaction={decorate}] (1,0) .. controls (1.25,-0.25) and (1.75,-0.25)  .. node[below] {$s_4$} (2,0);
\draw[postaction={decorate}] (2,0) -- node[right]{$s_5$} (1,1.73) node[above]{$v_4$};
\draw[postaction={decorate}] (1,1.73) node[above]{$v_4$} -- node[left]{$s_6$} (0,0);

\end{scope}

\end{tikzpicture}
\end{center}
Consider the function $\alpha:S \longrightarrow \mathbb{Z} \subseteq \mathbb{Z}_{3}$ defined via
$$\alpha(s_{3}) = \alpha(s_{6})=1,  \alpha(s_{4})=2, \alpha(s_{1})=\alpha(s_{2}) = \alpha(s_{5}) = 0. $$
We then obtain an abelian $3$-tower over $X$:
$$X  \longleftarrow X(\mathbb{Z}/3 \mathbb{Z},S,\alpha_{1}) \longleftarrow X(\mathbb{Z}/3^{2}\mathbb{Z},S,\alpha_{2}) \longleftarrow \ldots \longleftarrow X(\mathbb{Z}/3^{n}\mathbb{Z},S,\alpha_{n}) \longleftarrow \ldots$$
The $4 \times 4$ matrix $M$ is given by
\begin{equation*}
\begin{pmatrix}
3 & -2 + 2X_{1} & 0 & -1+Y_{2}\\
-2 + 2Y_{1} & 4 & -2+X_{2}+X_{3} & 0 \\
0 & -2 + Y_{2} + Y_{3} & 3 & -1+ X_{1} \\
-1+X_{2} & 0 & -1 + Y_{1} & 2
\end{pmatrix}.
\end{equation*}
The power series $Q$ starts as follows
$$-31T^{2} - 31T^{3} - 45T^{4} - \ldots \in \mathbb{Z}\llbracket T\rrbracket \subseteq \mathbb{Z}_{3} \llbracket T\rrbracket,$$
so we should have $\mu=0$ and $\lambda=1$.  We calculate
$$\kappa_{0} = 2^{2} \cdot 3, \kappa_{1} = 2^{2} \cdot 3^{2} \cdot 7^{2}, \kappa_{2} = 2^{2} \cdot 3^{3} \cdot 7^{2} \cdot 2251^{2}, \ldots $$
We have
$${\rm ord}_{3}(\kappa_{n}) = n + 1, $$
for all $n \ge 0$.
\end{enumerate}

%\section{Iwasawa modules arising from graph theory}
\section{Iwasawa modules arising from graph theory} \label{Gonet}
We end this paper with the following comment.  Let $\Gamma$ be a multiplicative topological group isomorphic to $\mathbb{Z}_{\ell}$.  For $n \ge 1$, let $\Gamma_{n} = \Gamma/\, \Gamma^{\ell^{n}} \simeq \mathbb{Z}/\ell^{n}\mathbb{Z}$, and consider the classical Iwasawa algebra
$$\Lambda = \lim_{\substack{\longleftarrow \\ n \ge1 }}\mathbb{Z}_{\ell}[\Gamma_{n}]. $$
It was shown in \cite{Serre:1995} that there is an isomorphism
$$\mathbb{Z}_{\ell}\llbracket T\rrbracket  \stackrel{\simeq}{\longrightarrow} \Lambda,$$
given by $T \mapsto 1-\gamma$, where $\gamma$ is a topological generator of $\Gamma$.  There is a known structure theorem for finitely generated $\Lambda$-modules up to pseudo-isomorphisms (see for instance \cite{Washington:1997}) and to every such finitely generated $\Lambda$-module $N$ is associated its Iwasawa invariants $\mu(N)$ and $\lambda(N)$.  In light of Theorem \ref{maintheorem}, it is natural to ask if there would be some finitely generated $\Lambda$-modules arising from graph theory.  If 
$$X = X_{0} \longleftarrow X_{1} \longleftarrow X_{2} \longleftarrow \ldots \longleftarrow X_{n} \longleftarrow \ldots$$
is an abelian $\ell$-tower of multigraphs, let $\mathcal{A}_{n}$ be the Sylow $\ell$-subgroup of the Picard group ${\rm Pic}^{\circ}(X_{n})$.  Recall that the cardinality of ${\rm Pic}^{\circ}(X_{n})$ is $\kappa_{n}$ and thus
$$|\mathcal{A}_{n}| = \ell^{{\rm ord}_{\ell}(\kappa_{n})}.$$  
(See the introduction to \cite{Lorenzini:2008} for an overview of where the finite abelian group ${\rm Pic}^{\circ}(X_{n})$ arises in mathematics and under what names.)  If the base graph $X$ is a simple graph (meaning it does not contain loops and parallel edges), Gonet constructed a $\Lambda$-module that is finitely generated and for which one can recapture the $\mathcal{A}_{n}$ by looking at some of its quotients.  This allowed her to obtain the existence of the invariants $\mu$ and $\lambda$ of Theorem \ref{maintheorem} when the base multigraph is a simple graph.  (See Theorem $27$ of \cite{Gonet:2021}.)  Theorem \ref{maintheorem} therefore gives in particular a way to calculate the Iwasawa invariants introduced in her thesis.

\bibliographystyle{plain}
\bibliography{main}

\end{document}